\theoremstyle{plain}
	\newtheorem{theorem}{Theorem}
	\newtheorem{proposition}[theorem]{Proposition}
	\newtheorem{corollary}[theorem]{Corollary}
\theoremstyle{definition}
	\newtheorem{definition}[theorem]{Definition}
	\newtheorem{example}[theorem]{Example}
\theoremstyle{remark}
	\newtheorem{remark}[theorem]{Remark}
\DeclareMathOperator{\alg}{\mathrm{alg}}
\DeclareMathOperator{\rad}{\mathrm{rad}}
\DeclareMathOperator{\Arf}{\mathrm{Arf}}
\newcommand{\Cl}{\mathrm{Cl}}
\newcommand{\bmu}{\boldsymbol{\mu}}
\newcommand{\bydef}{:=}
\begin{document}

\title{Clifford algebras as twisted group algebras and the Arf invariant}

\author{Alberto Elduque}
\address{Departamento de Matem\'aticas e
Instituto Uni\-ver\-si\-ta\-rio de Matem\'aticas y Apli\-ca\-cio\-nes,
Universidad de Zaragoza, 50009 Zaragoza, Spain.}
\email{elduque@unizar.es}

\author{Adri\'an Rodrigo-Escudero}
\address{Departamento de Matem\'aticas e
Instituto Uni\-ver\-si\-ta\-rio de Matem\'aticas y Apli\-ca\-cio\-nes,
Universidad de Zaragoza, 50009 Zaragoza, Spain.}
\email{adrian.rodrigo.escudero@gmail.com}

\date{22 January 2018}
\subjclass[2010]{Primary 15A66;
secondary 16S35, 16W50, 11E04.}
\keywords{Real Clifford algebra; twisted group algebra;
division grading; quadratic form; Arf invariant.}

\begin{abstract}
Some connections between quadratic forms over the field of two elements,
Clifford algebras of quadratic forms over the real numbers,
real graded division algebras,
and twisted group algebras
will be highlighted.
This allows to revisit real Clifford algebras in terms of
the Arf invariant of the associated quadratic forms over the field of two elements,
and give new proofs of some classical results.
\end{abstract}

\maketitle

\section{Introduction}

While studying graded-division finite-dimensional simple real algebras \cite{Rod16},
the second author found that they are intimately related
to regular quadratic forms over the field of two elements $\mathbb{F}_2$,
and their Arf invariant.
A similar connection was remarked by Ovsienko \cite{Ovs16} between
real Clifford algebras and regular quadratic forms over $\mathbb{F}_2$.

Actually, the graded-division algebras
with homogeneous components of dimension $1$
are nothing else but the twisted group algebras,
dealt with by many authors in different situations,
and the expression of Clifford algebras as twisted group algebras
was given by Albuquerque and Majid \cite{AM02}.
Here the groups behind the Clifford algebras
are the $2$-elementary abelian groups.

Let us mention that
Ovsienko also studied with Morier-Genoud \cite{MO10}
the real (and complex) Clifford algebras
as graded-commutative algebras,
that is, as graded algebras such that
the commutation relations of the homogeneous elements
are given by a bicharacter.
They proved that every
finite-dimensional simple (associative) graded-commutative algebra
over the field of real or complex numbers
is isomorphic to a Clifford algebra,
and that the bicharacter can be chosen to be
($-1$ raised to the power of)
the usual scalar product on $\mathbb{Z}_2^N$.

\medskip

The goal of this paper is to link the results of
Rodrigo-Escudero \cite{Rod16},
Ovsienko \cite{Ovs16},
and Albuquerque and Majid \cite{AM02},
connecting real graded-division algebras,
twisted group algebras, Clifford algebras,
and quadratic forms over $\mathbb{F}_2$.
Thanks to these connections,
we show that we can determine
the isomorphism class of a real Clifford algebra
in terms of the Arf invariant,
which allows us to give new proofs
(proofs of Theorem \ref{th:Arf_mupq} and Corollary \ref{cor:periodicity})
of some classical results.

\medskip

After reviewing the basic definitions on gradings
and on graded division algebras in Section \ref{se:gradings},
twisted group algebras will be considered in Section \ref{se:twisted},
where Theorem \ref{th:twisted}
relating alternating bicharacters and twisted group algebras
will be proved.
In Section \ref{se:Clifford_twisted},
the Clifford algebras of the nondegenerate quadratic forms
will be shown to be isomorphic to some twisted group algebras.
This was considered first in \cite{AM02}.
Section \ref{se:qf} will be devoted to review
the classical classification by Dickson \cite{Dic01}
of the regular quadratic forms over $\mathbb{F}_2$.
They are determined by their Arf invariant.

In Section \ref{se:qf_algebras}
each quadratic form over $\mathbb{F}_2$ is shown to define uniquely
a real twisted group algebra over a $2$-elementary abelian group,
the isomorphism class of this algebra is determined by
its dimension and the corresponding Arf invariant (Corollary \ref{co:isos}).

Finally, nice formulas for the Arf invariant
are given in Section \ref{se:real_Clifford}
(Theorem \ref{th:Arf_mupq}),
and the previous connections between real Clifford algebras,
twisted group algebras, and quadratic forms over $\mathbb{F}_2$
are used here to reprove some well-known results on real Clifford algebras:
the determination of their isomorphism class
(Corollary \ref{cor:Clifford_isos}),
and some periodicity results
(Corollary \ref{cor:periodicity}).

\section{Background on gradings}\label{se:gradings}

We want to study the relation between
Clifford algebras and twisted group algebras,
which are naturally graded algebras,
so let us recall the basic notions about gradings,
following \cite{EK13}.
Let $G$ be a group,
which will be assumed to be abelian in most cases,
a $G$-\emph{grading} on an $\mathbb{F}$-vector space $W$
is a decomposition of $W$ into
a direct sum of subspaces indexed by $G$,
\begin{equation}
\Gamma : W = \bigoplus_{ g \in G } W_g .
\end{equation}
The \emph{support} of $\Gamma$
(or of the $G$-graded vector space $W$)
is the subset of the grading group
$ \mathrm{supp} (\Gamma) := \{ g \in G \mid W_g \neq 0 \} $.
If $ 0 \neq  w \in W_g $,
we say that $w$ is \emph{homogeneous} of degree $g$,
and write $ \deg w = g $.
The subspace $W_g$ is called the
\emph{homogeneous component} of degree $g$.
A subspace $U$ of $W$ is said to be \emph{graded}
if $ U = \bigoplus_{ g \in G } ( W_g \cap U ) $.
Given two gradings on the same vector space,
$ \Gamma : W = \bigoplus_{ g \in G } W_g $ and
$ \Gamma' : W = \bigoplus_{ h \in H } W'_h $,
we say that $\Gamma'$ is a \emph{coarsening} of $\Gamma$,
or that $\Gamma$ is a \emph{refinement} of $\Gamma'$,
if for any $ g \in G $ there exists $ h \in H $
such that $ W_g \subseteq W'_h $.

If a $G$-grading $\Gamma$ on an algebra $\mathcal{D}$ also satisfies
\begin{equation}
\mathcal{D}_g \mathcal{D}_h \subseteq \mathcal{D}_{gh}
\end{equation}
for all $ g,h \in G$,
we say that $\Gamma$ is a \emph{group grading};
we will always assume that
the gradings on our algebras are group gradings.
The graded algebra $\mathcal{D}$ is said to be
a \emph{graded division} algebra if
the left and right multiplications
by any nonzero homogeneous element
are invertible operators.
If $\mathcal{D}$ is associative this is equivalent to $\mathcal{D}$ being unital
and every nonzero homogeneous element having an inverse.
In that case $ 1 \in \mathcal{D}_e $,
where $e$ is the neutral element of $G$,
and if $ 0 \neq X \in \mathcal{D}_g $,
then $ X^{-1} \in \mathcal{D}_{g^{-1}} $;
so the support of $\mathcal{D}$ is a subgroup of $G$.
Indeed, whenever $ \mathcal{D}_g \neq 0 $
and $ \mathcal{D}_h \neq 0 $, we also have
$ 0 \neq \mathcal{D}_g \mathcal{D}_h \subseteq \mathcal{D}_{gh} $
and $ \mathcal{D}_{g^{-1}} \neq 0 $.

Two group gradings
$ \Gamma  : \mathcal{D} = \bigoplus_{ g \in G } \mathcal{D}_g $ and
$ \Gamma' : \mathcal{E} = \bigoplus_{ h \in H } \mathcal{E}_h $ are
\emph{equivalent} if there is an isomorphism of algebras
$ \varphi : \mathcal{D} \rightarrow \mathcal{E} $,
such that for any $ g \in \mathrm{supp} (\Gamma) $,
there is an $ h \in \mathrm{supp} (\Gamma') $
such that $ \varphi (\mathcal{D}_g) = \mathcal{E}_h $.
If $H=G$ and $ \varphi (\mathcal{D}_g) = \mathcal{E}_g $ for any $ g \in G $,
then $\varphi$ is said to be an \emph{isomorphism} of $G$-graded algebras.

\section{Twisted group algebras}\label{se:twisted}

Given a group $G$, a field $\mathbb{F}$ and
a map $ \sigma : G \times G \rightarrow \mathbb{F}^{\times} $,
the \emph{twisted group algebra} $ \mathbb{F}^{\sigma} G $ is
the algebra over $\mathbb{F}$ with a basis consisting of a copy of $G$:
$ \{ \varepsilon_g : g \in G \} $,
and with (bilinear) multiplication given by
\begin{equation}
\varepsilon_g \varepsilon_h \bydef \sigma(g,h) \varepsilon_{gh}
\end{equation}
for any $ g,h \in G $.

In a natural way, $ \mathbb{F}^{\sigma} G $ is a $G$-graded algebra:
$ \left( \mathbb{F}^{\sigma} G \right)_g \bydef \mathbb{F} \varepsilon_g $,
which is not necessarily associative.
It is a graded-division algebra because
the left and right multiplications by $\varepsilon_g$
are invertible operators,
as $\sigma$ takes values in $\mathbb{F}^{\times}$.
Actually,
\emph{any $G$-graded-division algebra (not necessarily associative),
with homogeneous components of dimension $1$,
is a twisted group algebra},
isomorphic to $ \mathbb{F}^{\sigma} T $,
for a suitable $\sigma$, where $T$ is the support of the grading.

\begin{example}\label{ex:octonions}
With $ \mathbb{F} = \mathbb{R} $,
Albuquerque and Majid (see \cite{AM99}) considered
the classical algebras of complex numbers,
quaternions and octonions as the
twisted group algebras $ \mathbb{R}^{\sigma} T $ with:
\begin{itemize}
\item $T=\mathbb{Z}_2$ and
$ \sigma(x,y) = (-1)^{xy} $ for the complex numbers.
\item $T=\mathbb{Z}_2^2$ and
$ \sigma \bigl( (x_1,x_2),(y_1,y_2) \bigr) = (-1)^{x_1y_1+(x_1+x_2)y_2} $
for the real associative division algebra $\mathbb{H}$ of quaternions.
\item $T=\mathbb{Z}_2^3$ and
\[
\sigma \bigl( (x_1,x_2,x_3),(y_1,y_2,y_3) \bigr)
= (-1)^{ y_1x_2x_3 + x_1y_2x_3 + x_1x_2y_3
+ \sum_{ 1 \leq i \leq j \leq 3 } x_iy_j }
\] 
for the real non-associative division algebra $\mathbb{O}$ of octonions.
\end{itemize}
\end{example}

The twisted group algebra $ \mathbb{F}^{\sigma} G $
is associative if, and only if, for any $ g,h,k \in G $,
$ ( \varepsilon_g \varepsilon_h ) \varepsilon_k =
\varepsilon_g ( \varepsilon_h \varepsilon_k ) $,
and this is equivalent to the following condition on $\sigma$:
\begin{equation}\label{eq:cocycle}
\sigma(g,h) \sigma(gh,k) = \sigma(h,k) \sigma(g,hk) ,
\end{equation}
that is, to $\sigma$ being a $2$-cocycle
with values in $\mathbb{F}^{\times}$:
$ \sigma \in Z^2(G,\mathbb{F}^{\times}) $.
(See \cite[Subsection 1.2]{Pas77} for the basic properties
of associative twisted group algebras.)

Two twisted group algebras
$ \mathbb{F}^{\sigma} G $ and $ \mathbb{F}^{\sigma'} G $
are isomorphic as $G$-graded algebras
(these are named `diagonally equivalent' in \cite{Pas77})
if and only if there is a map
$ \mu : G \rightarrow \mathbb{F}^{\times} $
such that the linear map determined by
$ \varepsilon_g \mapsto \mu(g) \varepsilon_g' $,
where the $\varepsilon_g'$'s are the elements
of the natural basis of $ \mathbb{F}^{\sigma'} G $,
is an isomorphism of algebras, and this happens if, and only if,
\begin{equation}
\mu(gh) \sigma(g,h) = \mu(g) \mu(h) \sigma'(g,h) ,
\end{equation}
that is, if and only if $\sigma$ and $\sigma'$ are cohomologous.

Note that for $ \sigma \in Z^2(G,\mathbb{F}^{\times}) $,
with $h=k=e$ and with $g=h=e$ (the neutral element of $G$)
in Equation \eqref{eq:cocycle},
we get $\sigma(e,e)=\sigma(g,e)=\sigma(e,g)$ for any $ g \in G $,
and hence the element $ 1 = \sigma(e,e)^{-1} \varepsilon_e $
is the unity element of $ \mathbb{F}^\sigma G $.
Moreover, the powers of the basic elements are given by
$ \varepsilon_g^n =
\left( \prod_{i=1}^{n-1} \sigma(g^i,g) \right) \varepsilon_{g^n} $,
and hence, if the order of $g$ is $n$ we have
\begin{equation}\label{eq:powers}
\varepsilon_g^n =
\left( \prod_{i=0}^{n-1} \sigma(g^i,g) \right) 1 .
\end{equation}

\medskip

Given a $2$-cocycle
$ \sigma \in Z^2(G,\mathbb{F}^{\times}) $,
consider the map $ \beta : G \times G \rightarrow \mathbb{F}^\times $
given by $ \beta(g,h) = \sigma(g,h) \sigma(h,g)^{-1} $.
In other words, $\beta$ is determined by the condition:
\begin{equation}\label{eq:beta_comm}
\varepsilon_g \varepsilon_h
= \beta(g,h) \varepsilon_h \varepsilon_g
\end{equation}
for any $ g,h \in G $, that is,
$\beta$ measures the lack of commutativity of $ \mathbb{F}^\sigma G $.
Trivially we have $\beta(g,g)=1$ for any $ g \in G $.

The associativity of $ \mathbb{F}^\sigma G $ gives, for any $ g,h,k \in G $:
\[
\varepsilon_g \varepsilon_h \varepsilon_k = \begin{cases}
	\sigma(g,h) \varepsilon_{gh} \varepsilon_k =
	\sigma(g,h) \beta(gh,k) \varepsilon_k \varepsilon_{gh} ,
	\\
	\beta(g,k) \beta(h,k) \varepsilon_k \varepsilon_g \varepsilon_h =
	\sigma(g,h) \beta(g,k) \beta(h,k) \varepsilon_k \varepsilon_{gh} ,
\end{cases}
\]
and therefore $\beta$ is multiplicative in the first variable,
$ \beta(gh,k) = \beta(g,k) \beta(h,k) $,
and likewise in the second;
so $\beta$ is an \emph{alternating}
($\beta(g,g)=1$ for any $ g \in G $)
\emph{bicharacter} on $G$.

\medskip

Our next result shows that,
assuming that $G$ is a finitely generated abelian group,
any alternating bicharacter on $G$ comes from
an associative twisted group algebra defined on $G$.

\begin{theorem}\label{th:twisted}
Let $G$ be a finitely generated abelian group,
and let $ \beta : G \times G \rightarrow \mathbb{F}^\times $
be an alternating bicharacter on $G$,
then there exists a $2$-cocycle
$ \sigma \in Z^2(G,\mathbb{F}^\times) $ such that
$ \beta(g,h) = \sigma(g,h) \sigma(h,g)^{-1} $.

Moreover, if $ G = \langle g_1 \rangle
\times \cdots \times \langle g_N \rangle $,
for a finite number of elements $ g_1 , \ldots , g_N $,
with $g_i$ of order $ m_i \in \mathbb{N}_{ \geq 2 } $
for $ i = 1,\ldots,r $,
and $g_i$ of infinite order for $ i = r+1,\ldots,N $;
and if $ \mu_1,\ldots,\mu_r \in \mathbb{F}^\times $ are chosen arbitrarily,
then the $2$-cocycle $\sigma$ above can be taken so that
the twisted group algebra $ \mathbb{F}^\sigma G $ is isomorphic,
as a $G$-graded algebra, to the following
unital associative algebra defined by generators and relations:
\begin{multline}\label{eq:Ammubeta}
\alg \big\langle
x_1,\ldots,x_N
\mid x_i^{m_i} = \mu_i \text{, } i = 1,\ldots,r ; \\
x_i x_j = \beta(g_i,g_j) x_j x_i \text{, } i,j = 1,\ldots,N
\big\rangle ,
\end{multline}
which is a $G$-graded algebra with
$\deg(x_i)=g_i$ for $i=1,\ldots,N$.
\end{theorem}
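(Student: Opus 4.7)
The plan is to construct $\sigma$ explicitly from the given direct-product decomposition of $G$, and then read off the isomorphism with the algebra in \eqref{eq:Ammubeta}. For each $g \in G$, write it uniquely as $g = g_1^{a_1(g)}\cdots g_N^{a_N(g)}$ with $a_i(g) \in \{0,\ldots,m_i-1\}$ for $i \leq r$ and $a_i(g) \in \mathbb{Z}$ for $i > r$. Motivated by formally reducing the product $\bigl(x_1^{a_1(g)}\cdots x_N^{a_N(g)}\bigr)\bigl(x_1^{a_1(h)}\cdots x_N^{a_N(h)}\bigr)$ to canonical form using the relations of \eqref{eq:Ammubeta}, I would define
\[
\sigma(g,h) \bydef \prod_{1 \leq j < i \leq N} \beta(g_i,g_j)^{a_i(g)\,a_j(h)} \cdot \prod_{\substack{1 \leq i \leq r \\ a_i(g)+a_i(h) \geq m_i}} \mu_i.
\]

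The first step is to verify that $\sigma$ satisfies the $2$-cocycle condition \eqref{eq:cocycle}. Writing $a_i=a_i(g)$, $b_i=a_i(h)$, $c_i=a_i(k)$, the $\beta$-contributions on both sides reduce, after using $\beta(g_i^{m_i},g_j)=\beta(g_i,g_j^{m_j})=1$ (since $g_i$ has order $m_i$) to kill the overflow corrections, to $\prod_{j<i}\beta(g_i,g_j)^{a_ib_j+a_ic_j+b_ic_j}$; and the $\mu_i$-contributions on both sides equal $\mu_i^{\lfloor(a_i+b_i+c_i)/m_i\rfloor}$, which is the total number of overflows regardless of how they are grouped. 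The second step is the identity $\beta(g,h) = \sigma(g,h)\sigma(h,g)^{-1}$: the $\mu_i$-terms in $\sigma(g,h)$ and $\sigma(h,g)$ are symmetric in the two arguments and cancel, leaving
\[
\frac{\sigma(g,h)}{\sigma(h,g)} = \prod_{j<i}\beta(g_i,g_j)^{a_i(g)a_j(h)-a_i(h)a_j(g)},
\]
and this matches the expansion of $\beta(g,h)$ by bilinearity once one uses $\beta(g_i,g_i)=1$ and $\beta(g_i,g_j)\beta(g_j,g_i)=1$ (the latter coming from $\beta(g_ig_j,g_ig_j)=1$).

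For the isomorphism with the algebra $\mathcal{A}$ in \eqref{eq:Ammubeta}, I would use the assignment $x_i \mapsto \varepsilon_{g_i}$. It is well-defined because the $\varepsilon_{g_i}$'s satisfy the defining relations: the commutation relations follow from the previous step, and the power relation $\varepsilon_{g_i}^{m_i}=\mu_i\cdot 1$ follows from \eqref{eq:powers} together with the direct computation $\sigma(g_i^k,g_i)=1$ for $1 \leq k \leq m_i-2$ and $\sigma(g_i^{m_i-1},g_i)=\mu_i$, read off from the explicit formula. Surjectivity is immediate. For injectivity, the commutation and power relations in \eqref{eq:Ammubeta} allow one to rewrite any word in the $x_i$'s as an $\mathbb{F}$-linear combination of the canonical monomials $x_1^{a_1}\cdots x_N^{a_N}$ (with $a_i$ in the allowed ranges); these map to nonzero scalar multiples of the distinct basis vectors $\varepsilon_g$ of $\mathbb{F}^\sigma G$, so they are linearly independent and the map is a $G$-graded isomorphism.

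The main obstacle I anticipate is the bookkeeping in the cocycle verification, and in particular noticing the two ingredients that make it go through cleanly: on the $\beta$-side, the reduction $\beta(\cdot,g_j^{m_j})=1$ that allows one to replace the canonical exponents $a_j(gh)$ by the unreduced sums $a_j(g)+a_j(h)$, and on the $\mu$-side, the fact that the overflow count $\lfloor(a_i(g)+a_i(h)+a_i(k))/m_i\rfloor$ is symmetric in the grouping of the three arguments. The rest of the argument is then routine.
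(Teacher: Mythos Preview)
Your proposal is correct and follows essentially the same route as the paper: you write down the same explicit cocycle (the paper's Equation~\eqref{eq:sigma}), verify $\beta=\sigma\sigma^{\mathrm{op}\,-1}$, check the cocycle identity, and obtain the isomorphism with $\mathcal{A}_{\mathbb{F}}(N,\underline{m},\underline{\mu},\beta)$ via $x_i\mapsto\varepsilon_{g_i}$ together with the dimension-one-per-component argument. The only cosmetic difference is that the paper verifies \eqref{eq:cocycle} by induction on $N$ (peeling off the last factor and recognizing three cocycle factors), whereas you do the computation directly in one pass; both rely on the same two observations you singled out, namely $\beta(g_i,g_j)^{m_i}=\beta(g_i,g_j)^{m_j}=1$ and the grouping-independence of the overflow count $\lfloor(a_i+b_i+c_i)/m_i\rfloor$.
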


Note that the free associative algebra generated by $N$ generators
can be graded by any group $G$ by assigning arbitrarily degrees
to the free generators.
Hence the grading in the algebra
in Equation \eqref{eq:Ammubeta} is well defined,
as the relations imposed on the generators are homogeneous.

For future reference, the algebra in Equation \eqref{eq:Ammubeta},
which depends on $N$,
the $m_i$'s,
the $\mu_i$'s,
and the alternating bicharacter $\beta$,
will be denoted by
$ \mathcal{A}_{\mathbb{F}} (N,\underline{m},\underline{\mu},\beta) $,
where $ \underline{m} = (m_1,\ldots,m_r) $ and
$ \underline{\mu} = (\mu_1,\ldots,\mu_r) $,
$ 0 \leq r \leq N $.
($ \underline{m} = \emptyset = \underline{\mu} $ if $r=0$.)

\begin{proof}
Define $ \sigma : G \times G \rightarrow \mathbb{F}^\times $ by
\begin{equation}\label{eq:sigma}
\sigma \left( g_1^{a_1} \cdots g_N^{a_N} , g_1^{b_1} \cdots g_N^{b_N} \right)
= \left( \prod_{i>j} \beta \left( g_i^{a_i} , g_j^{b_j} \right) \right)
\left( \prod_{i=1}^r \mu_i^{ \epsilon_i(a_i,b_i) } \right) ,
\end{equation}
for $ a_i , b_i \in \mathbb{Z} $ for $i=1,\ldots,N$,
with $ 0 \leq a_i,b_i < m_i $ for $i=1,\ldots,r$,
and where, for $i=1,\ldots,r$,
$ \epsilon_i (a_i,b_i) = 1 $ if $ a_i+b_i \geq m_i $,
and $ \epsilon_i (a_i,b_i) = 0 $ otherwise.

It is clear that $ \sigma(g,h) \sigma(h,g)^{-1} = \beta(g,h) $
for any $ g,h \in G $.
We must check that $\sigma$ thus defined is a $2$-cocycle,
and this is done by induction on $N$.
If $N=1$ and $r=0$,
then $ \sigma ( g_1^{a_1} , g_1^{b_1} ) = 1 $ for any $a_1,b_1$,
and $\sigma$ is the trivial cocycle.
On the other hand, if $N=r=1$,
$ \sigma ( g_1^{a_1} , g_1^{b_1} ) = \mu_1^{ \epsilon_1(a_1,b_1) } $,
$ 0 \leq a_1,b_1 < m_1 $.
For $x=g_1^{a_1}$, $y=g_1^{b_1}$, and $z=g_1^{c_1}$,
with $ 0 \leq a_1,b_1,c_1 < m_1 $,
\[
\sigma(x,y) \sigma(xy,z) = \sigma(y,z) \sigma(x,yz) = \begin{cases}
	1 & \text{if } a_1+b_1+c_1 < m_1 , \\
	\mu_1 & \text{if } m_1 \leq a_1+b_1+c_1 < 2 m_1 , \\
	\mu_1^2 & \text{if } 2 m_1 \leq a_1+b_1+c_1 .
\end{cases}
\]
Now assume that $N>1$ and that the result is valid for $N-1$.
For $ x = g_1^{a_1} \cdots g_N^{a_N} $ write
$ x' = g_1^{a_1} \cdots g_{N-1}^{a_{N-1}} $,
so that $ x = x' g_N^{a_N} $,
and similarly for $ y = g_1^{b_1} \cdots g_N^{b_N} $
and $ z = g_1^{c_1} \cdots g_N^{c_N} $.
Then
\[
\sigma(x,y) = \sigma(x',y')
\beta ( g_N^{a_N} , y' )
\sigma( g_N^{a_N} , g_N^{b_N} )
.
\]
The factor $\sigma(x',y')$ is a $2$-cocycle (case $N-1$),
and so is the factor $ \sigma( g_N^{a_N} , g_N^{b_N} ) $ (case $N=1$).
On the other hand,
with $ \tilde{\sigma}(x,y) \bydef \beta (g_N^{a_N},y') $,
we have
\[
\begin{split}
\tilde\sigma(y,z) \tilde\sigma(x,yz) &
= \beta (g_N^{b_N},z') \beta (g_N^{a_N},y'z')
= \beta (g_N^{b_N},z') \beta (g_N^{a_N},y') \beta (g_N^{a_N},z') , \\
\tilde\sigma(x,y) \tilde\sigma(xy,z) &
= \beta (g_N^{a_N},y') \beta ( g_N^{a_n} g_N^{b_N} , z' )
= \beta (g_N^{a_N},y') \beta (g_N^{a_N},z') \beta(g_N^{b_N},z') ,
\end{split}
\]
and this shows that $\tilde{\sigma}(x,y)$ is a $2$-cocycle too.

Finally, in the twisted group algebra $ \mathbb{F}^\sigma G $ we have,
because of Equation \eqref{eq:powers}, that for $i=1,\ldots,r$:
\[
\varepsilon_{g_i}^{m_i}
= \left( \prod_{j=0}^{m_i-1} \sigma(g_i^j,g_i) \right) 1
= \mu_i 1 .
\]
Hence the generators $\varepsilon_{g_i}$, $i=1,\ldots,N$,
of $ \mathbb{F}^\sigma G $ satisfy the relations
defining the algebra in Equation \eqref{eq:Ammubeta},
so there is a surjective homomorphism of $G$-graded algebras
$ \varphi : \mathcal{A}_{\mathbb{F}}
(N,\underline{m},\underline{\mu},\beta)
\rightarrow \mathbb{F}^\sigma G $
that sends $x_i$ to $\varepsilon_{g_i}$.
But for any $ g = g_1^{a_1} \cdots g_N^{a_n} \in G $,
the homogeneous component $ \mathcal{A}_{\mathbb{F}}
(N,\underline{m},\underline{\mu},\beta)_g $
is spanned by the monomial $ x_1^{a_1} \cdots x_N^{a_N} $,
so its dimension is at most $1$.
It follows that this dimension is exactly $1$
and that $\varphi$ is an isomorphism.
\end{proof}

\section{Clifford algebras are twisted group algebras}\label{se:Clifford_twisted}

We start recalling the definition of Clifford algebra.
Let $\mathbb{F}$ be a field of characteristic different from $2$,
let $V$ be an $\mathbb{F}$-vector space of finite dimension $N$,
and let $ Q : V \to \mathbb{F} $ be a non-degenerate quadratic form on $V$.
The fact that $Q$ is a quadratic form is equivalent to the existence of a
a symmetric bilinear form $ B : V \times V \to \mathbb{F} $ such that
\begin{equation}
2 Q(v) = B(v,v)
\end{equation}
for all $ v \in V $.
We can recover $B$ from $Q$, since
\begin{equation}\label{eq:bf_from_qf}
B(u,v) = Q(u+v)-Q(u)-Q(v)
\end{equation}
for all $ u,v \in V $.
Denote by $T(V)$ the tensor algebra of $V$,
and by $I(Q)$ the two-sided ideal of $T(V)$
generated by all elements of the form
$ v \otimes v - Q(v)1 $ with $ v \in V $.
The \emph{Clifford algebra} of $(V,Q)$
is $ \Cl(V,Q) := T(V) / I(Q) $.

Let $ \{ v_1 , \ldots , v_N \} $ be an orthogonal basis of $V$,
then by its own definition $\Cl(V,Q)$ is isomorphic to the algebra
\begin{equation}\label{eq:Cl_generators_relations}
\alg \big\langle x_1,\ldots,x_N \mid
x_i^2=\mu_i \text{, } i=1,\ldots,N \text{; }
x_ix_j=-x_jx_i \text{, } i \neq j \big\rangle ,
\end{equation}
under an isomorphism that takes each generator $x_i$
to the class of $v_i$ modulo $I(Q)$.
That is, $\Cl(V,Q)$ is isomorphic to the algebra
$ \mathcal{A}_{\mathbb{F}} (N,\underline{2},\underline{\mu},\beta) $
in Equation \eqref{eq:Ammubeta}, where:
\begin{itemize}
\item $ \underline{2} = (2,2,\ldots,2) $ ($N$ components),
\item $ \underline{\mu} = \bigl( Q(v_1),Q(v_2),\ldots,Q(v_N) \bigr) $,
\item $\beta$ is the alternating bicharacter on
the cartesian product $G=C_2^N$ of
$N$ copies of the cyclic group of two elements
(so that $ G = \langle g_1 \rangle \times \cdots \times \langle g_N \rangle $
with $g_i$ or order $2$ for any $i=1,\ldots,N$),
with $ \beta (g_i,g_j) = -1 $ for any $ i \neq j $.
\end{itemize}

If we also have $ Q(v_1) = \dots = Q(v_N) = 1 $,
we denote the Clifford algebra by $\Cl_N(\mathbb{F})$.
In the case $ \mathbb{F} = \mathbb{R} $,
if $ Q(v_1) = \dots = Q(v_p) = +1 $ and
$ Q(v_{p+1}) = \dots = Q(v_N) = -1 $,
we denote the Clifford algebra by $\Cl_{p,q}(\mathbb{R})$,
where $ p+q = N $.

Theorem \ref{th:twisted} tells us that
$ \mathcal{A}_{\mathbb{F}} (N,\underline{2},\underline{\mu},\beta) $
is graded-isomorphic to a twisted group algebra $ \mathbb{F}^\sigma G $
and, in particular, its dimension is $2^N$.
It also follows that $\Cl(V,Q)$ is a $G$-graded-division algebra,
with $\deg(v_i)=g_i$ for $i=1,\ldots,N$
(here we identify $ v_i \in V $ with
the class of $v_i$ modulo $I(Q)$ in $\Cl(V,Q)$),
which is the grading by the group $ C_2^N \cong \mathbb{Z}_2^N $
that Albuquerque and Majid constructed in \cite[Proposition 2.2]{AM02}.

\begin{remark}
Write $ v_I := v_{i_1} \cdots v_{i_k} $,
where $ 1 \leq i_1 < \dots < i_k \leq N $
and $ I = \{ i_1 , \dots , i_k \} $,
and also denote $ v_{\emptyset} := 1 $.
It is clear that $\Cl(V,Q)$ is linearly spanned by
$ \{v_I\}_{ I \subseteq \{ 1,\dots,N \} } $,
but the fact that this family is linearly independent,
that is, $ \dim \Cl(V,Q) = 2^N $,
is not immediate
(see for example \cite[II.1.2]{Che54} or
\cite[Theorem V.1.8]{Lam05}).
In our case, this is a consequence of Theorem \ref{th:twisted}.
Alternatively, we can use the natural
$\mathbb{Z}_2^N$-grading of Albuquerque and Majid
to give a direct proof:
Since the $v_I$'s belong to different homogeneous components,
they are linearly independent as long as they are nonzero;
and they are nonzero because they are invertible,
since $ Q(v_i) \neq 0 $ for $i=1,\dots,N$
(the quadratic form $Q$ is non-degenerate).
Of course, for this argument to be complete,
it has to be proved first that $ 1 \not\in I(Q) $,
for example embedding $\Cl(V,Q)$ in
the algebra of endomorphisms of the exterior algebra $\Lambda(V)$,
as in \cite[page 38]{Che54}.
Finally note that this also implies that $ \dim \Cl(V,Q) = 2^N $
in the general case, that is, if the quadratic form $Q$ is degenerate,
since we can embed $\Cl(V,Q)$ in
$ \mathbb{F}[X_1] / (X_1^2-Q(v_1)) \otimes \dots \otimes
\mathbb{F}[X_N] / (X_N^2-Q(v_N)) \otimes \Cl_N(\mathbb{F}) $,
and we have proved that $ \dim \Cl_N(\mathbb{F}) = 2^N $.
\end{remark}

\begin{remark}
There are division gradings on Clifford algebras
with grading group different from $ C_2^N \cong \mathbb{Z}_2^N $.
For instance, we can construct an example
of a division grading on $\Cl_3(\mathbb{F})$
by the group $ \mathbb{Z}_2 \times \mathbb{Z}_4
= \langle a \rangle \times \langle b \rangle $
following \cite[Subsection 2.4]{BZ16}:
\begin{equation}
\begin{array}{ll}
	e = \deg 1 , \quad
	& a = \deg v_1 ,
	\\ b = \deg ( v_2 + v_3 v_1 ) , \quad
	& ab = \deg ( v_3 - v_1 v_2 ) ,
	\\ b^2 = \deg v_1 v_2 v_3 , \quad
	& ab^2 = \deg v_2 v_3 ,
	\\ b^3 = \deg ( v_2 - v_3 v_1 ) , \quad
	& ab^3 = \deg ( v_3 + v_1 v_2 ) .
\end{array}
\end{equation}
However, in these graded algebras
the information of the Clifford structure is lost,
and it is more natural to study them
focusing only on their graded nature
(see \cite{BZ16,Rod16,BKR17}),
so we are not interested in them.
\end{remark}

\section{Quadratic forms over the field of two elements}\label{se:qf}

Here we will consider quadratic forms
$ \mathbf{q} : W \rightarrow \mathbb{F}_2 $
defined on a finite dimensional vector space $W$
over the field $\mathbb{F}_2$ of two elements.
Let $ b_{\mathbf{q}} : W \times W \rightarrow \mathbb{F}_2 $
be the associated bilinear form as in Equation \eqref{eq:bf_from_qf}.
As $2=0$ in $\mathbb{F}_2$, $b_{\mathbf{q}}$ is alternating:
$ b_{\mathbf{q}} (w,w) = 0 $ for any $ w\in W $.

Two quadratic forms $ \mathbf{q} : W \rightarrow \mathbb{F}_2 $ and
$ \mathbf{q}' : W' \rightarrow \mathbb{F}_2 $ are \emph{equivalent}
(and we will write $ \mathbf{q} \sim \mathbf{q}' $)
if there is a linear isomorphism $ \varphi : W \rightarrow W' $ such that
$ \mathbf{q}' \bigl( \varphi(w) \bigr) = \mathbf{q}(w) $ for any $ w \in W $.

The \emph{orthogonal sum} $ \mathbf{q} \perp \mathbf{q}' $
is the quadratic form on $ W \times W' $ given by
\begin{equation}
( \mathbf{q} \perp \mathbf{q}' )(w,w')
= \mathbf{q}(w) + \mathbf{q}'(w')
\end{equation}
for any $ w \in W $ and $ w' \in W' $.

\medskip

Dickson \cite{Dic01}
(see also \cite{Ovs16} and \cite[Section 5]{BKR17})
classified long ago the quadratic forms $\mathbf{q}$
on $\mathbb{F}_2^N$ with $ \dim \rad (b_{\mathbf{q}}) \leq 1 $.
(The radical of the bilinear form $b_{\mathbf{q}}$ is the subspace
$ \{ \mathbf{x} \in \mathbb{F}_2^N \mid
b_{\mathbf{q}} (\mathbf{x},\mathbf{y}) = 0 \text{, }
\forall \mathbf{y} \in \mathbb{F}_2^N \} $.)
Dickson's results are summarized as follows: Any quadratic form
$ \mathbf{q} : \mathbb{F}_2^N \rightarrow \mathbb{F}_2 $
with $ \dim \rad (b_{\mathbf{q}}) \leq 1 $
is equivalent to one of the following:
\begin{description}
\item[$N$ even] ($N=2n$)
\begin{itemize}
	\item $ \mathbf{q}_0^N (x_{1},\ldots,x_{2n}) =
	x_{1}x_{2} + x_{3}x_{4} + \cdots + x_{2n-1}x_{2n} $,
	\item $ \mathbf{q}_1^N (x_{1},\ldots,x_{2n}) =
	x_{1}x_{2} + x_{3}x_{4} + \cdots + x_{2n-1}x_{2n}
	+ x_1 + x_2 $,
	(note that $x=x^2$ for any $ x \in \mathbb{F}_2 = \{0,1\} $).
\end{itemize}
\item[$N$ odd] ($N=2n+1$)
\begin{itemize}
	\item $ \mathbf{q}_0^N (x_{1},\ldots,x_{2n+1}) =
	x_{1}x_{2} + x_{3}x_{4} + \cdots + x_{2n-1}x_{2n} $,
	\item $ \mathbf{q}_1^N (x_{1},\ldots,x_{2n+1}) =
	x_{1}x_{2} + x_{3}x_{4} + \cdots + x_{2n-1}x_{2n}
	+ x_1 + x_2 $,
	\item $ \mathbf{q}_2^N (x_{1},\ldots,x_{2n+1}) =
	x_{1}x_{2} + x_{3}x_{4} + \cdots + x_{2n-1}x_{2n}
	+ x_{2n+1} $.
\end{itemize}
\end{description}

\begin{remark}
For $N$ even $ \rad( b_{\mathbf{q}_0^N} )
= 0 = \rad( b_{\mathbf{q}_1^N} ) $,
while for $N$ odd $ \dim \rad( b_{\mathbf{q}_i^N} ) = 1 $ for $i=0,1,2$,
and only $\mathbf{q}_2^N$ is \emph{regular}
(this means that either $ \rad( b_{\mathbf{q}} ) = 0 $,
or $ \dim \rad( b_{\mathbf{q}} ) = 1 $ but $\mathbf{q}$
is not trivial on $ \rad( b_{\mathbf{q}} ) $).
\end{remark}

The equivalence class of a quadratic form $\mathbf{q}$
on $\mathbb{F}_2^N$ with $ \dim \rad( b_{\mathbf{q}} ) \leq 1 $
is determined by its \emph{Arf invariant} $\Arf(\mathbf{q})$,
which tells us whether there are more elements $ w \in \mathbb{F}_2^N $
with $\mathbf{q}(w)=0$ or $\mathbf{q}(w)=1$:

\begin{definition}
Let $ \mathbf{q} : W \rightarrow \mathbb{F}_2 $ be a quadratic form
on a finite dimensional vector space $W$ over $\mathbb{F}_2$.
Then
\begin{equation}
\Arf(\mathbf{q}) \bydef
\begin{cases}
1 & \text{if } \lvert \{ w \in W \mid \mathbf{q}(w) = 0 \} \rvert
> \lvert \{ w \in W \mid \mathbf{q}(w) = 1 \} \rvert , \\
0 & \text{if } \lvert \{ w \in W \mid \mathbf{q}(w) = 0 \} \rvert
= \lvert \{ w \in W \mid \mathbf{q}(w) = 1 \} \rvert , \\
-1 & \text{if } \lvert \{ w \in W \mid \mathbf{q}(w) = 0 \} \rvert
< \lvert \{ w \in W \mid \mathbf{q}(w) = 1 \} \rvert .
\end{cases}
\end{equation}
\end{definition}

An easy computation shows:
\begin{equation}\label{eq:arf_q0q1q2}
\Arf(\mathbf{q}_0^N) = 1 , \qquad 
\Arf(\mathbf{q}_1^N) =-1 , \qquad 
\Arf(\mathbf{q}_2^N) = 0
\text{ (} N \text{ odd).}
\end{equation}

\medskip

A quadratic form that will be important later on
is the one in the next example:

\begin{example}\label{ex:mu_pq}
Take $ p,q \in \mathbb{N} \cup \{0\} $ such that $p+q=N$.
The quadratic form $ \mathbf{q}_{p,q} :
\mathbb{F}_2^N \rightarrow \mathbb{F}_2 $ is defined by:
\begin{equation}
\mathbf{q}_{p,q} (x_1,\ldots,x_N) =
x_{q+1}^2 + \cdots + x_N^2 +
\sum_{ 1 \leq i < j \leq N } x_i x_j .
\end{equation}
Remark that the associated alternating bilinear form is:
\begin{equation}
b_{\mathbf{q}_{p,q}}
\left( ( x_1 , \dots , x_N ) , ( y_1 , \dots , y_N ) \right)
= \sum_{ 1 \leq i \neq j \leq N } x_i y_j .
\end{equation}
Denote by $ \mathbf{e}_1 = (1,0,0,\ldots,0) $,
$ \mathbf{e}_2 = (0,1,0,\ldots,0) $, \ldots,
$ \mathbf{e}_N = (0,0,0,\ldots,1) $,
the canonical generators of $\mathbb{F}_2^N$.
Clearly $ b_{ \mathbf{q}_{p,q} } ( \mathbf{e}_i , \mathbf{e}_j ) = 1 $
for any $ i \neq j $.
Note that $ b_{ \mathbf{q}_{p,q} } $ depends only on $N=p+q$,
and not on $p$ and $q$,
and it is easily checked that for $N$ even
$ b_{ \mathbf{q}_{p,q} } $ is nondegenerate,
while for $N$ odd its radical has dimension $1$.
\end{example}

For later use, it will be convenient to move to multiplicative notation.
Let $ C_2 = \{ \pm 1 \} $ be
the cyclic group of order $2$ `in multiplicative version'.
A multiplicative quadratic form on $C_2^N$ is
a map $ \bmu : C_2^N \rightarrow C_2 $ such that
the map $ \beta_{\bmu} : C_2^N \times C_2^N \rightarrow C_2 $ given by
\begin{equation}
\beta_{\bmu}(r,s) = \mu(rs) \mu(r) \mu(s)
\end{equation}
is an alternating
(that is, $ \beta_{\bmu}(r,r) = 1 $ for any $ r \in C_2^N $)
bicharacter.
Note that $ \bmu(r) = \bmu(r)^{-1} $ for any $r\in C_2^N$.

The multiplicative versions of the quadratic forms
$\mathbf{q}_i^N$ above will be denoted by $\bmu_i^N$,
and the multiplicative version of $\mathbf{q}_{p,q}$
in Example \ref{ex:mu_pq} will be denoted by $\bmu_{p,q}$.

\section{From quadratic forms to real algebras}\label{se:qf_algebras}

Any multiplicative quadratic form $ \bmu : C_2^N \rightarrow C_2 $
determines the corresponding bicharacter $\beta_{\bmu}$
and this may be considered as a bicharacter
with values in $\mathbb{R}^{\times}$,
taking only the values $\pm 1$.
Hence $\bmu$ determines the associative algebra
$ \mathcal{A}_{\mathbb{R}} ( N , \underline{2} , \underline{\mu} , \beta_{\bmu}) $,
with $ \underline{\mu} = \bigl( \bmu(\mathbf{e}_1) ,
\ldots , \bmu(\mathbf{e}_N) \bigr) $,
where we denote too by $\mathbf{e}_i$ the canonical generators of $C_2^N$
that corresponds to the canonical basic elements of $\mathbb{F}_2^N$ above:
\[
\mathbf{e}_i = ( 1 , \ldots , 1 , \overbrace{-1}^i , 1 , \ldots , 1 ) .
\]

Theorem \ref{th:twisted} has the following direct consequence:

\begin{proposition}\label{pr:mu_twisted}
Given a multiplicative quadratic form $ \bmu : C_2^N \rightarrow C_2 $,
the $C_2^N$-graded algebra
$ \mathcal{A}_{\mathbb{R}} ( N , \underline{2} , \underline{\mu} , \beta_{\bmu}) $
is graded isomorphic to the twisted group algebra $ \mathbb{R}^{\sigma} C_2^N $,
where the $2$-cocycle $\sigma$ is given by Equation \eqref{eq:sigma}.
In particular,
\begin{equation}\label{eq:SigmaMu}
\sigma(q,q) = \bmu(q) \text{ } \forall q \in C_2^N ;
\end{equation}
and
\begin{equation}\label{eq:SigmaBeta}
\sigma( \mathbf{e}_i , \mathbf{e}_j ) =
\begin{cases}
	1 &
	\text{if } i<j , \\
	\beta_{\bmu} ( \mathbf{e}_i , \mathbf{e}_j ) &
	\text{if } i>j .
\end{cases}
\end{equation}
\end{proposition}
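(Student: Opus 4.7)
The plan is to invoke Theorem \ref{th:twisted} with $G = C_2^N$ generated by $g_i = \mathbf{e}_i$ ($i = 1, \ldots, N$), each of order $m_i = 2$, taking $\mu_i \bydef \bmu(\mathbf{e}_i) \in \mathbb{R}^\times$ and $\beta \bydef \beta_{\bmu}$ regarded as a bicharacter with values in $\{\pm 1\} \subseteq \mathbb{R}^\times$. This immediately produces the desired graded isomorphism between $\mathcal{A}_{\mathbb{R}}(N, \underline{2}, \underline{\mu}, \beta_{\bmu})$ and $\mathbb{R}^{\sigma} C_2^N$ with $\sigma$ given by Equation \eqref{eq:sigma}, so the remaining work is to read off the two evaluations \eqref{eq:SigmaMu} and \eqref{eq:SigmaBeta} by direct substitution into that formula.

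For \eqref{eq:SigmaBeta} I plug $q = \mathbf{e}_i$ and $q' = \mathbf{e}_j$ into \eqref{eq:sigma}. In the product $\prod_{k>\ell} \beta_{\bmu}(\mathbf{e}_k^{a_k}, \mathbf{e}_\ell^{b_\ell})$ only the pair $(k,\ell)$ with $a_k = b_\ell = 1$, i.e.\ $(k,\ell) = (i,j)$, can give a nontrivial factor; this pair satisfies $k > \ell$ precisely when $i > j$. Moreover, for $i \neq j$ the exponents in each coordinate add up to at most $1$, so every $\epsilon_k(a_k,b_k)$ vanishes and the $\mu$-factor is trivial, producing the two cases in \eqref{eq:SigmaBeta}.

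For \eqref{eq:SigmaMu} I write a generic element as $q = \mathbf{e}_1^{a_1}\cdots \mathbf{e}_N^{a_N}$ with $a_i \in \{0,1\}$ and evaluate \eqref{eq:sigma} at $(q,q)$. Since $\epsilon_i(a_i,a_i) = a_i$ and $\beta_{\bmu}$ is bimultiplicative, this becomes
\[
\sigma(q,q) = \prod_{i > j} \beta_{\bmu}(\mathbf{e}_i, \mathbf{e}_j)^{a_i a_j} \prod_{i=1}^N \bmu(\mathbf{e}_i)^{a_i}.
\]
On the other hand, the defining identity of $\beta_{\bmu}$ (using that $\bmu$ is $\{\pm 1\}$-valued, so $\bmu(r)^{-1} = \bmu(r)$) can be rewritten as the polarization $\bmu(rs) = \beta_{\bmu}(r,s)\bmu(r)\bmu(s)$. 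Induction on $N$, splitting $q = \mathbf{e}_1^{a_1} \cdot (\mathbf{e}_2^{a_2}\cdots \mathbf{e}_N^{a_N})$ at each step, then yields
\[
\bmu(q) = \prod_{i < j} \beta_{\bmu}(\mathbf{e}_i, \mathbf{e}_j)^{a_i a_j} \prod_{i=1}^N \bmu(\mathbf{e}_i)^{a_i}.
\]
The two products over ordered pairs coincide because $\beta_{\bmu}$ takes values in $\{\pm 1\}$ and is hence symmetric (expanding $1 = \beta_{\bmu}(rs, rs)$ bilinearly gives $\beta_{\bmu}(r,s)\beta_{\bmu}(s,r) = 1$).

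I expect the only delicate point to be the polarization/iteration step: peeling off generators of $q$ one at a time and keeping careful track of the $\beta_{\bmu}$-factors, and then matching the indexing $i < j$ against $i > j$ via the symmetry of $\beta_{\bmu}$. Everything else is formal substitution into Theorem \ref{th:twisted}.
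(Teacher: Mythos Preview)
Your proposal is correct and follows precisely the route the paper intends: the paper states the proposition as a ``direct consequence'' of Theorem~\ref{th:twisted} and gives no further argument, and your proof simply supplies the routine substitutions into Equation~\eqref{eq:sigma} that the paper leaves implicit. The verifications of \eqref{eq:SigmaBeta} and \eqref{eq:SigmaMu}, including the use of the polarization identity $\bmu(rs)=\beta_{\bmu}(r,s)\bmu(r)\bmu(s)$ and the symmetry of $\beta_{\bmu}$ coming from its alternating property and $\{\pm 1\}$-valuedness, are all sound.
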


\begin{remark}
The graded algebra
$ \mathcal{A}_{\mathbb{R}} ( N , \underline{2} , \underline{\mu} , \beta_{\bmu}) $
is determined by Equations \eqref{eq:beta_comm} and \eqref{eq:SigmaMu},
so it only depends on the quadratic form $\bmu$
and not on the chosen basis of the grading group $C_2^N$,
and we will denote it simply by $\mathcal{A}_{\mathbb{R}}(\bmu)$.
Also, if two multiplicative quadratic forms $\bmu$ and $\bmu'$ are equivalent,
then $\mathcal{A}_{\mathbb{R}}(\bmu)$ is isomorphic to
$\mathcal{A}_{\mathbb{R}}(\bmu')$;
and for any two multiplicative quadratic forms $\bmu$ and $\bmu'$,
$ \mathcal{A}_{\mathbb{R}}(\bmu) \otimes_\mathbb{R}
\mathcal{A}_{\mathbb{R}}(\bmu') $
is isomorphic to $ \mathcal{A}_{\mathbb{R}}( \bmu \perp \bmu' ) $.
\end{remark}

The arguments in \cite{Ovs16} or \cite[Section 5]{BKR17} give:

\begin{theorem}\label{th:isos}
We have the following isomorphisms,
\begin{itemize}
\item $ M_{2^n}(\mathbb{R})
\cong \mathcal{A}_{\mathbb{R}}(\bmu_0^{2n}) $,
\item $ M_{2^{n-1}}(\mathbb{H})
\cong \mathcal{A}_{\mathbb{R}}(\bmu_1^{2n}) $,
\item $ M_{2^n}(\mathbb{R}) \times M_{2^n}(\mathbb{R})
\cong \mathcal{A}_{\mathbb{R}}(\bmu_0^{2n+1}) $,
\item $ M_{2^{n-1}}(\mathbb{H}) \times M_{2^{n-1}}(\mathbb{H})
\cong \mathcal{A}_{\mathbb{R}}(\bmu_1^{2n+1}) $,
\item $ M_{2^n}(\mathbb{C})
\cong \mathcal{A}_{\mathbb{R}}(\bmu_2^{2n+1}) $.
\end{itemize}
\end{theorem}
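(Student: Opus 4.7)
The plan is to reduce everything to a few small base cases and then propagate to general $N$ via orthogonal sums of quadratic forms, exploiting the remark in Section \ref{se:qf_algebras} that $\mathcal{A}_\mathbb{R}(\bmu \perp \bmu') \cong \mathcal{A}_\mathbb{R}(\bmu) \otimes_\mathbb{R} \mathcal{A}_\mathbb{R}(\bmu')$ and that equivalent forms produce isomorphic algebras.

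First I would handle the base cases directly from the presentations. The algebra $\mathcal{A}_\mathbb{R}(\bmu_0^2)$ has relations $x_1^2 = x_2^2 = 1$ and $x_1 x_2 = -x_2 x_1$, and sending $x_1, x_2$ to the anticommuting matrices $\bigl(\begin{smallmatrix} 1 & 0 \\ 0 & -1 \end{smallmatrix}\bigr)$ and $\bigl(\begin{smallmatrix} 0 & 1 \\ 1 & 0 \end{smallmatrix}\bigr)$ yields a surjection onto $M_2(\mathbb{R})$; it is an isomorphism because both sides have dimension $4$ by Theorem \ref{th:twisted}. The algebra $\mathcal{A}_\mathbb{R}(\bmu_1^2)$ has $x_1^2 = x_2^2 = -1$ and $x_1 x_2 = -x_2 x_1$, which is the standard Hamiltonian presentation of $\mathbb{H}$. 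In the one-dimensional odd cases, $\mathcal{A}_\mathbb{R}(\bmu_0^1) \cong \mathbb{R}[t]/(t^2-1) \cong \mathbb{R} \times \mathbb{R}$ and $\mathcal{A}_\mathbb{R}(\bmu_2^1) \cong \mathbb{R}[t]/(t^2+1) \cong \mathbb{C}$.

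Next I decompose each $\bmu_i^N$ orthogonally. By Dickson's classification any form with $\dim \rad(b_\bmu) \leq 1$ is determined up to equivalence by $N$ together with $\Arf$, and the Arf invariant is multiplicative under $\perp$ on nondegenerate parts. This gives $\bmu_0^{2n} \sim (\bmu_0^2)^{\perp n}$ ($\Arf = 1$) and $\bmu_1^{2n} \sim (\bmu_0^2)^{\perp(n-1)} \perp \bmu_1^2$ ($\Arf = -1$). For odd $N = 2n+1$, a direct check from the explicit formulas in Section \ref{se:qf} shows that the $(2n{+}1)$-th canonical generator lies in $\rad(b_{\bmu_i^{2n+1}})$, with $\bmu$-value $+1$ for $i = 0, 1$ and $-1$ for $i = 2$; hence $\bmu_0^{2n+1} \sim \bmu_0^{2n} \perp \bmu_0^1$, $\bmu_1^{2n+1} \sim \bmu_1^{2n} \perp \bmu_0^1$, and $\bmu_2^{2n+1} \sim \bmu_0^{2n} \perp \bmu_2^1$.

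Combining these two steps, each of the five isomorphisms reduces to the standard identities $M_a(\mathbb{R}) \otimes M_b(\mathbb{R}) \cong M_{ab}(\mathbb{R})$, $\mathbb{H} \otimes_\mathbb{R} \mathbb{H} \cong M_4(\mathbb{R})$, $M_a(\mathbb{R}) \otimes_\mathbb{R} \mathbb{H} \cong M_a(\mathbb{H})$, $M_a(\mathbb{R}) \otimes_\mathbb{R} \mathbb{C} \cong M_a(\mathbb{C})$, and $M_a(\mathbb{R}) \otimes_\mathbb{R} (\mathbb{R} \times \mathbb{R}) \cong M_a(\mathbb{R}) \times M_a(\mathbb{R})$. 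The step I would treat most carefully is the decomposition of $\bmu_1^{2n}$: the naive guess of $n$ copies of $\bmu_1^2$ would produce $\mathbb{H}^{\otimes n}$, but multiplicativity of $\Arf$ forbids this, since an even number of copies of $\bmu_1^2$ gives $\Arf = 1$ rather than $-1$; the classification forces exactly one $\bmu_1^2$ factor together with $n-1$ copies of $\bmu_0^2$, yielding precisely $M_{2^{n-1}}(\mathbb{H})$.
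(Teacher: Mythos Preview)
Your argument is correct. The paper itself does not prove this theorem but simply refers to \cite{Ovs16} and \cite[Section 5]{BKR17}, so your proposal supplies a self-contained argument that the paper omits; the strategy of reducing to small base cases and propagating via $\mathcal{A}_\mathbb{R}(\bmu\perp\bmu')\cong\mathcal{A}_\mathbb{R}(\bmu)\otimes_\mathbb{R}\mathcal{A}_\mathbb{R}(\bmu')$ is exactly the natural one and is in the spirit of those references.

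One simplification worth noting: you go through multiplicativity of $\Arf$ under $\perp$ and Dickson's classification to obtain $\bmu_0^{2n}\sim(\bmu_0^2)^{\perp n}$ and $\bmu_1^{2n}\sim\bmu_1^2\perp(\bmu_0^2)^{\perp(n-1)}$, but these are immediate from the explicit formulas in Section~\ref{se:qf}. Indeed $\mathbf{q}_0^{2n}(x_1,\ldots,x_{2n})=\sum_{i=1}^n x_{2i-1}x_{2i}$ is literally an orthogonal sum of $n$ copies of $\mathbf{q}_0^2$, and $\mathbf{q}_1^{2n}$ differs only by the additional linear terms $x_1+x_2$, so its first two-variable block is a copy of $\mathbf{q}_1^2$ and the remaining blocks are copies of $\mathbf{q}_0^2$. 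The same applies to the odd cases, as you yourself observe. With this direct reading your cautious final paragraph becomes unnecessary, and the identity $\mathbb{H}\otimes_\mathbb{R}\mathbb{H}\cong M_4(\mathbb{R})$, which you list among the standard tensor facts, is never actually invoked.
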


Therefore, the isomorphism class of the algebra $\mathcal{A}_{\mathbb{R}}(\bmu)$,
for a regular multiplicative quadratic form $\bmu$
is determined by the equivalence class of $\bmu$,
and hence by the dimension $N$ and by the Arf invariant of $\bmu$.
Thus, we can rewrite the previous result as follows:

\begin{corollary}\label{co:isos}
Let $ \bmu : C_2^N \rightarrow C_2 $ be
a regular multiplicative quadratic form.
Then there are the following possibilities:
\begin{itemize}
\item if $N$ is even, $N=2n$, and $\Arf(\bmu)=1$,
then $ \mathcal{A}_{\mathbb{R}}(\bmu) \cong M_{2^n}(\mathbb{R}) $,
\item if $N$ is even, $N=2n$, and $\Arf(\bmu)=-1$,
then $ \mathcal{A}_{\mathbb{R}}(\bmu) \cong M_{2^{n-1}}(\mathbb{H}) $,
\item if $N$ is odd, $N=2n+1$, and $\Arf(\bmu)=1$,
then $ \mathcal{A}_{\mathbb{R}}(\bmu) \cong M_{2^n}(\mathbb{R}) \times M_{2^n}(\mathbb{R}) $,
\item if $N$ is odd, $N=2n+1$, and $\Arf(\bmu)=-1$,
then $ \mathcal{A}_{\mathbb{R}}(\bmu) \cong M_{2^{n-1}}(\mathbb{H}) \times M_{2^{n-1}}(\mathbb{H}) $,
\item if $N$ is odd, $N=2n+1$, and $\Arf(\bmu)=0$,
then $ \mathcal{A}_{\mathbb{R}}(\bmu) \cong M_{2^n}(\mathbb{C}) $.
\end{itemize}
\end{corollary}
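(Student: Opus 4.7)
The plan is to deduce Corollary \ref{co:isos} directly from Theorem \ref{th:isos} by invoking Dickson's classification of regular quadratic forms over $\mathbb{F}_2$ together with the invariance of $\mathcal{A}_{\mathbb{R}}(\bmu)$ under equivalence of $\bmu$. In essence, once one knows that the equivalence class of a regular $\bmu$ is determined by $N$ and $\Arf(\bmu)$, the corollary is merely a repackaging of Theorem \ref{th:isos} labeled by Arf values instead of by the representatives $\bmu_i^N$.

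First I would recall that by Dickson's classification (Section \ref{se:qf}), every regular multiplicative quadratic form $\bmu : C_2^N \rightarrow C_2$ is equivalent to one of the standard forms: $\bmu_0^N$ or $\bmu_1^N$ when $N$ is even, and $\bmu_0^N$, $\bmu_1^N$, or $\bmu_2^N$ when $N$ is odd (noting that $\bmu_0^N$ and $\bmu_1^N$ are regular in the odd case since their restriction to the one-dimensional radical is easily checked to be nontrivial for $\bmu_2^N$, etc.; I would quickly verify that the regularity is preserved by equivalence, which is immediate from the definition).

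Next, by Equation \eqref{eq:arf_q0q1q2} (translated to multiplicative notation), the Arf invariants of these standard forms are pairwise distinct: $\Arf(\bmu_0^N)=1$, $\Arf(\bmu_1^N)=-1$, and $\Arf(\bmu_2^N)=0$. Since $\Arf$ is by definition an invariant of the equivalence class, this means the dimension $N$ together with $\Arf(\bmu)$ uniquely determines which standard form $\bmu$ is equivalent to. Then I would invoke the remark following Proposition \ref{pr:mu_twisted}, which asserts that equivalent multiplicative quadratic forms produce isomorphic algebras $\mathcal{A}_{\mathbb{R}}(-)$; thus $\mathcal{A}_{\mathbb{R}}(\bmu) \cong \mathcal{A}_{\mathbb{R}}(\bmu_i^N)$ for the appropriate index $i$ determined by $\Arf(\bmu)$.

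Finally I would apply Theorem \ref{th:isos} case by case: $\Arf(\bmu)=1$ with $N=2n$ gives $M_{2^n}(\mathbb{R})$, $\Arf(\bmu)=-1$ with $N=2n$ gives $M_{2^{n-1}}(\mathbb{H})$, and similarly in the odd cases, yielding exactly the five listed possibilities. There is no genuine obstacle here — the whole content has been assembled in earlier sections — so the only thing to be careful about is the bookkeeping: matching each Arf value and parity of $N$ to the correct standard form and then to the correct matrix algebra from Theorem \ref{th:isos}.
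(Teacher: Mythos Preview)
Your approach is exactly the paper's: the corollary is presented there simply as a ``rewriting'' of Theorem \ref{th:isos}, using Dickson's classification (so that the equivalence class of $\bmu$ is determined by $N$ and $\Arf(\bmu)$) together with the remark that equivalent forms yield isomorphic algebras $\mathcal{A}_{\mathbb{R}}(-)$. One small slip: your parenthetical has the regularity backwards---for odd $N$ the paper's own remark states that only $\bmu_2^N$ is regular, while $\bmu_0^N$ and $\bmu_1^N$ are not (they vanish on their one-dimensional radical); the corollary's hypothesis should arguably read ``$\dim\rad(\beta_{\bmu})\leq 1$'' to cover all five cases, but this is a wording issue in the statement rather than a defect in your argument.
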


\begin{remark}
In \cite[Section 5]{BKR17} (see also \cite{Rod16}) the opposite reasoning was made.
Real graded-division algebras with homogeneous components of dimension $1$
and center of dimension at most $2$ were classified,
obtaining Corollary \ref{co:isos}.
Then, from that classification of gradings,
Dickson's classification of quadratic forms was derived.
\end{remark}

Recall that $\bmu_{p,q}$ denotes the multiplicative version of $\mathbf{q}_{p,q}$.
Equation \eqref{eq:Cl_generators_relations} proves our next result:

\begin{theorem}\label{th:Cl_pq}
The Clifford algebra $\Cl_{p,q}(\mathbb{R})$ is isomorphic to
$\mathcal{A}_{\mathbb{R}}(\bmu_{p,q})$.
\end{theorem}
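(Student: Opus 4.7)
The plan is a definitional chase: I will recognise both $\Cl_{p,q}(\mathbb{R})$ and $\mathcal{A}_{\mathbb{R}}(\bmu_{p,q})$ as instances of the presented algebra $\mathcal{A}_{\mathbb{F}}(N,\underline{2},\underline{\mu},\beta)$ of Equation~\eqref{eq:Ammubeta}, with matching data, and then conclude by reading off the isomorphism.

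First, I would specialise Equation~\eqref{eq:Cl_generators_relations} to an orthogonal basis $v_1,\ldots,v_N$ with $Q(v_i)=+1$ for $i\le p$ and $Q(v_i)=-1$ for $i>p$. This exhibits $\Cl_{p,q}(\mathbb{R})$ as $\mathcal{A}_{\mathbb{R}}(N,\underline{2},\underline{\mu},\beta)$ with $\underline{\mu}=\bigl(Q(v_1),\ldots,Q(v_N)\bigr)$ and $\beta(g_i,g_j)=-1$ for all $i\ne j$, over the grading group $C_2^N$.

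Second, I would unfold the definition of $\mathcal{A}_{\mathbb{R}}(\bmu_{p,q})$. By the text just after Proposition~\ref{pr:mu_twisted}, this is $\mathcal{A}_{\mathbb{R}}(N,\underline{2},\underline{\mu}',\beta_{\bmu_{p,q}})$, where $\mu'_i=\bmu_{p,q}(\mathbf{e}_i)$. The formula for $b_{\mathbf{q}_{p,q}}$ in Example~\ref{ex:mu_pq} gives $b_{\mathbf{q}_{p,q}}(\mathbf{e}_i,\mathbf{e}_j)=1$ for all $i\ne j$, hence $\beta_{\bmu_{p,q}}(\mathbf{e}_i,\mathbf{e}_j)=-1$; the entries of $\underline{\mu}'$ are read off from the diagonal (``square'') part of $\mathbf{q}_{p,q}$.

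Third, both algebras share the same bicharacter $\beta\equiv -1$ off the diagonal, the same tuple of orders $m_i=2$, and compatible $\underline{\mu}$-tuples (agreeing up to a reindexing of the generators of $C_2^N$). By the Remark after Proposition~\ref{pr:mu_twisted}, $\mathcal{A}_{\mathbb{R}}(\bmu)$ depends only on the equivalence class of $\bmu$, so such a reindexing does not change the algebra. Matching the two presentations then yields the desired isomorphism. The only nontrivial ingredient is Theorem~\ref{th:twisted}, which ensures both sides really have dimension $2^N$; beyond that, the argument is pure bookkeeping, as the paper's own remark ``Equation~\eqref{eq:Cl_generators_relations} proves our next result'' suggests.
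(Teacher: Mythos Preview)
Your proposal is correct and is precisely the paper's approach: the one-line justification ``Equation~\eqref{eq:Cl_generators_relations} proves our next result'' amounts to recognising both algebras as the same instance of $\mathcal{A}_{\mathbb{R}}(N,\underline{2},\underline{\mu},\beta)$, which is exactly your definitional chase. Your hedge about ``reindexing'' is unnecessary: once you actually compute $\bmu_{p,q}(\mathbf{e}_i)$ from Example~\ref{ex:mu_pq} (read consistently with the computations of $\bmu_{N,0}$ and $\bmu_{0,N}$ in the proof of Theorem~\ref{th:Arf_mupq}), you obtain $\underline{\mu}'=\underline{\mu}$ on the nose.
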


\section{Real Clifford algebras}\label{se:real_Clifford}

In order to classify the Clifford algebras $\Cl_{p,q}(\mathbb{R})$
we must compute the Arf invariant $\Arf(\bmu_{p,q})$ of
the (multiplicative versions of the) quadratic forms in Example \ref{ex:mu_pq}.
By $\mathrm{sign}(x)$ we denote the sign $+1$, $0$, or $-1$,
of the real number $x$, this sign being $0$ for $x=0$.

\begin{theorem}\label{th:Arf_mupq}
For any $p,q\in\mathbb{N}\cup\{0\}$, the Arf invariant of $\bmu_{p,q}$ is:
\begin{align}
\Arf(\bmu_{p,q})
& = \mathrm{sign} \left( \cos \frac{ (p-q) \pi }{4}
+ \sin \frac{ (p-q) \pi }{4} \right)
\\ & = \begin{cases}
	1 & \text{if } p-q+1 \equiv 1,2,3 \pmod{8} , \\
	0 & \text{if } p-q+1 \equiv 0,4 \pmod{8} , \\
	-1 & \text{if } p-q+1 \equiv 5,6,7 \pmod{8} .
\end{cases}
\end{align}
\end{theorem}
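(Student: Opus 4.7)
The plan is to compute $\Arf(\bmu_{p,q})$ directly from its defining count. Since $\Arf(\bmu_{p,q}) = \mathrm{sign}(n_0 - n_1)$, where $n_i = |\{x \in \mathbb{F}_2^N : \mathbf{q}_{p,q}(x) = i\}|$, the object to evaluate in closed form is the exponential sum
\[
S_{p,q} := n_0 - n_1 = \sum_{x \in \mathbb{F}_2^N} (-1)^{\mathbf{q}_{p,q}(x)},
\]
after which the result can be matched against the claimed trigonometric expression.

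First I would rewrite $\mathbf{q}_{p,q}(x)$ for $x \in \{0,1\}^N$ in terms of the numbers $s$ and $t$ of nonzero coordinates in the two natural blocks (those carrying a linear term versus the rest). Using $x_i^2 \equiv x_i \pmod 2$ together with the integer identity $\sum_{1 \leq i<j \leq N} x_i x_j = \binom{s+t}{2}$ valid for $\{0,1\}$-vectors, the form reduces to $\mathbf{q}_{p,q}(x) \equiv s + \binom{s+t}{2} \pmod 2$. Grouping contributions by $k = s+t$ and recognising the resulting inner binomial convolutions as coefficients of $(1+z)^p(1-z)^q$, one obtains
\[
S_{p,q} = \sum_{k=0}^{N} (-1)^{\binom{k}{2}}\,[z^k]\bigl((1+z)^p (1-z)^q\bigr).
\]

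The key device is the arithmetic identity
\[
(-1)^{\binom{k}{2}} = \mathrm{Re}\bigl((1-i)\,i^k\bigr),
\]
checked by inspection for the four residues $k \bmod 4$. Substituting collapses the exponential sum to a single complex evaluation of the generating polynomial at $z = i$:
\[
S_{p,q} = \mathrm{Re}\Bigl((1-i)(1+i)^p (1-i)^q\Bigr) = \mathrm{Re}\Bigl((1+i)^p (1-i)^{q+1}\Bigr).
\]
Writing $1 \pm i = \sqrt 2\, e^{\pm i\pi/4}$ converts this to polar form, yielding $S_{p,q} = 2^{(N+1)/2}\cos\frac{(p-q-1)\pi}{4}$. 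The angle-sum identity $\cos(\theta - \pi/4) = \tfrac{1}{\sqrt 2}(\cos\theta + \sin\theta)$, applied with $\theta = (p-q)\pi/4$, rewrites this as
\[
S_{p,q} = 2^{N/2}\left(\cos\frac{(p-q)\pi}{4} + \sin\frac{(p-q)\pi}{4}\right).
\]
Taking signs produces the first expression in the theorem, and enumerating the sign of $\cos + \sin$ on the eight residues of $p-q+1 \pmod 8$ gives the case-by-case table.

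The main obstacle is bookkeeping rather than conceptual: reducing $\sum_{i<j} x_i x_j$ to $\binom{s+t}{2}$, tracking which block of coordinates contributes the linear term, and noticing the identity $(-1)^{\binom{k}{2}} = \mathrm{Re}((1-i) i^k)$, which is exactly what allows an exponential sum over $\mathbb{F}_2^N$ to collapse into an evaluation of a polynomial at a fourth root of unity. Everything else is elementary polar-form algebra and a finite check of eight residues.
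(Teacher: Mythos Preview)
Your argument is correct. The reduction $\mathbf{q}_{p,q}(x)\equiv s+\binom{s+t}{2}\pmod 2$ (with $s$ counting ones in the block carrying the linear terms and $t$ in the complementary block) is right, the generating-function regrouping to $\sum_k(-1)^{\binom{k}{2}}[z^k]\bigl((1+z)^p(1-z)^q\bigr)$ is valid, the identity $(-1)^{\binom{k}{2}}=\mathrm{Re}\bigl((1-i)i^k\bigr)$ holds for all four residues, and the polar-form computation leading to $S_{p,q}=2^{N/2}\bigl(\cos\tfrac{(p-q)\pi}{4}+\sin\tfrac{(p-q)\pi}{4}\bigr)$ is clean.

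Your route differs from the paper's. The paper first proves the reduction $\Arf(\bmu_{p+1,q+1})=\Arf(\bmu_{p,q})$ by a coset-decomposition argument, thereby reducing to the boundary cases $\bmu_{N,0}$ and $\bmu_{0,N}$; it then evaluates each of those as a signed binomial sum $S_0\pm S_1-S_2\mp S_3$ with $S_j=\sum_{r\equiv j\,(4)}\binom{N}{r}$, and computes the $S_j$ via a Chinese-Remainder/roots-of-unity argument in $\mathbb{R}[T]/(T^4-1)$. You instead treat all $(p,q)$ at once: your identity $(-1)^{\binom{k}{2}}=\mathrm{Re}\bigl((1-i)i^k\bigr)$ is precisely the roots-of-unity filter packaged so that the whole exponential sum collapses to a single evaluation of $(1+z)^p(1-z)^q$ at $z=i$. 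This is shorter and avoids the separate reduction lemma. The paper's approach, on the other hand, isolates $\Arf(\bmu_{p+1,q+1})=\Arf(\bmu_{p,q})$ as a standalone fact, which it reuses verbatim for the periodicity results in Corollary~\ref{cor:periodicity}; in your approach that identity is only visible a posteriori from the closed form.
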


\begin{proof}
Several steps will be followed.

\medskip

\noindent$\bullet$\quad
Let us prove first the formula
\begin{equation}
\Arf(\bmu_{p+1,q+1}) = \Arf(\bmu_{p,q}) .
\end{equation}
To do this, denote by $W$ the subgroup of $C_2^{(p+1)+(q+1)}$ generated by
$ \mathbf{e}_1 , \allowbreak \ldots ,
\allowbreak \mathbf{e}_p , \allowbreak \mathbf{e}_{p+2} ,
\allowbreak \ldots , \allowbreak \mathbf{e}_{p+q+1} $.
Then for any $ w \in W $,
$ \bmu( w \mathbf{e}_{p+1} ) \neq \bmu( w \mathbf{e}_{p+q+2} ) $,
because $ \beta_{\bmu} ( w , \mathbf{e}_{p+1} )
= \beta_{\bmu} ( w , \mathbf{e}_{p+q+2} ) $,
and $ \bmu(\mathbf{e}_{p+1}) = 1 $
and $ \bmu(\mathbf{e}_{p+q+2}) = -1 $.
On the other hand, for any $ w \in W $,
$ \bmu(w) = \bmu( w \mathbf{e}_{p+1} \mathbf{e}_{p+q+2} ) $,
because $ \beta_{\bmu} ( w , \mathbf{e}_{p+1} \mathbf{e}_{p+q+2} ) = 1 $
and $ \bmu( \mathbf{e}_{p+1} \mathbf{e}_{p+q+2} ) = 1 $.
Since the group $C_2^{(p+1)+(q+1)}$ can be expressed as the disjoint union
$ W \cup W \mathbf{e}_{p+1} \cup W \mathbf{e}_{p+q+2}
\cup W \mathbf{e}_{p+1} \mathbf{e}_{p+q+2} $, we get
\begin{align*}
\big\lvert \{ w \in C_2^{(p+1)+(q+1)}
& : \bmu_{p+1,q+1}(w) = 1 \} \big\rvert = \\
& = 2 \bigl\lvert \{ w \in W
: \bmu_{p+1,q+1}(w) = 1 \} \big\rvert + \lvert W \rvert , \\
\big\lvert \{ w \in C_2^{(p+1)+(q+1)}
& : \bmu_{p+1,q+1}(w) = -1 \} \big\rvert = \\
& = 2 \bigl\lvert \{ w \in W
: \bmu_{p+1,q+1}(w) = -1 \} \big\rvert + \lvert W \rvert ,
\end{align*}
and the result follows.

\medskip

\noindent$\bullet$\quad
Therefore we get
\begin{equation}\label{eq:reduction_Arf}
\Arf(\bmu_{p,q}) = \begin{cases}
	\Arf(\bmu_{1,1}) & \text{if } p=q , \\
	\Arf(\bmu_{p-q,0}) & \text{if } p>q , \\
	\Arf(\bmu_{0,q-p}) & \text{if } p<q ,
\end{cases}
\end{equation}
and it is enough to check our formulas for
$\bmu_{N,0}$ and $\bmu_{0,N}$, $ N \in \mathbb{N} $.

For $\bmu_{N,0}$, consider the canonical generators
$\mathbf{e}_i$ of $C_2^N$ as in Example \ref{ex:mu_pq}.
For any sequence $ I = \{ 1 \leq i_1 < i_2 < \cdots < i_r \leq N \} $,
let $ \mathbf{e}_I \bydef \mathbf{e}_{i_1} \mathbf{e}_{i_2} \cdots \mathbf{e}_{i_r} $.
Then $ \bmu_{N,0}(\mathbf{e}_I) = (-1)^{\binom{r}{2}} $,
so we arrive at
\begin{equation}\label{eq:mu_N0}
\vert \bmu_{N,0}^{-1} (+1) \vert -
\vert \bmu_{N,0}^{-1} (-1) \vert
= \sum_{r=0}^{N} \binom{N}{r} (-1)^{(r-1)r/2}
= S_0 + S_1 - S_2 - S_3 .
\end{equation}
While for $\bmu_{0,N}$,
we have $ \bmu_{0,N}(\mathbf{e}_I)
= (-1)^r (-1)^{\binom{r}{2}} = (-1)^{r+\binom{r}{2}} $, so
\begin{equation}\label{eq:mu_0N}
\vert \bmu_{0,N}^{-1} (+1) \vert -
\vert \bmu_{0,N}^{-1} (-1) \vert
= \sum_{r=0}^{N} \binom{N}{r} (-1)^{(r-1)r/2+r}
= S_0 - S_1 - S_2 + S_3 .
\end{equation}
Where $S_0$, $S_1$, $S_2$ and $S_3$ are
the following binomial sums:
\begin{equation}\label{eq:bin_sum_0}
S_0 := \binom{N}{0} + \binom{N}{4} + \dots
+ \binom{N}{ 4 \lfloor \frac{N-0}{4} \rfloor + 0 }
= \frac{1}{2} \left( 2^{N-1} + 2^{N/2}
\cos \frac{N\pi}{4} \right) ,
\end{equation}
\begin{equation}\label{eq:bin_sum_1}
S_1 := \binom{N}{1} + \binom{N}{5} + \dots
+ \binom{N}{ 4 \lfloor \frac{N-1}{4} \rfloor + 1 }
= \frac{1}{2} \left( 2^{N-1} + 2^{N/2}
\sin \frac{N\pi}{4} \right) ,
\end{equation}
\begin{equation}\label{eq:bin_sum_2}
S_2 := \binom{N}{2} + \binom{N}{6} + \dots
+ \binom{N}{ 4 \lfloor \frac{N-2}{4} \rfloor + 2 }
= \frac{1}{2} \left( 2^{N-1} - 2^{N/2}
\cos \frac{N\pi}{4} \right) ,
\end{equation}
\begin{equation}\label{eq:bin_sum_3}
S_3 := \binom{N}{3} + \binom{N}{7} + \dots
+ \binom{N}{ 4 \lfloor \frac{N-3}{4} \rfloor + 3 }
= \frac{1}{2} \left( 2^{N-1} - 2^{N/2}
\sin \frac{N\pi}{4} \right) .
\end{equation}
The last equality for each binomial sum is well-known,
but we give an algebraic proof of them below,
in the last step of this proof of Theorem \ref{th:Arf_mupq}.

Taking into account Equations \eqref{eq:bin_sum_0}-\eqref{eq:bin_sum_3}, we get
\begin{equation*}
\Arf(\bmu_{N,0}) = \mathrm{sign} \left(
\cos \frac{N\pi}{4} +
\sin \frac{N\pi}{4} \right) ,
\end{equation*}
and
\begin{equation*}
\Arf(\bmu_{0,N}) = \mathrm{sign} \left(
\cos \frac{N\pi}{4} -
\sin \frac{N\pi}{4} \right) ,
\end{equation*}
which, together with Equation \eqref{eq:reduction_Arf}, give
\begin{equation*}
\Arf(\bmu_{p,q}) = \mathrm{sign} \left(
\cos \frac{(p-q)\pi}{4} +
\sin \frac{(p-q)\pi}{4} \right)\, .
\end{equation*}
This proves the first half of the formula in the statement of the Theorem.
The second half is clear.

\medskip

\noindent$\bullet$\quad
To prove Equations \eqref{eq:bin_sum_0}-\eqref{eq:bin_sum_3},
consider the isomorphism of
unital $\mathbb{R}$-algebras given by
the Chinese Remainder Theorem:
\begin{align}
\varphi : \mathbb{R}[T] / (T^4-1)
& \longrightarrow \mathbb{R}[X] / (X-1)
\times \mathbb{R}[Y] / (Y+1)
\times \mathbb{R}[Z] / (Z^2+1)
\\ f(T)+(T^4-1)
& \longmapsto ( f(X)+(X-1)
, f(Y)+(Y+1) , f(Z)+(Z^2+1) ) .
\nonumber
\end{align}
If we write $ t = T+(T^4-1) $
and $ i = Z+(Z^2+1) $,
then $\varphi$ is defined by
$ \varphi(1) = (1,1,1) $,
$ \varphi(t) = (1,-1,i) $,
$ \varphi(t^2) = (1,1,-1) $ and
$ \varphi(t^3) = (1,-1,-i) $.
Reciprocally, $\varphi^{-1}$ is determined by
$ \varphi^{-1}(1,0,0) = (1+t+t^2+t^3)/4 $,
$ \varphi^{-1}(0,1,0) = (1-t+t^2-t^3)/4 $,
$ \varphi^{-1}(0,0,1) = (1-t^2)/2 $ and
$ \varphi^{-1}(0,0,i) = (t-t^3)/2 $.

On the one hand,
we can compute $(1+t)^N$ in $\mathbb{R}[t]$,
\begin{equation}
(1+t)^N = \sum_{i=0}^N \binom{N}{i} t^i
= S_0 1 + S_1 t + S_2 t^2 + S_3 t^3 .
\end{equation}
On the other hand,
we can apply first $\varphi$ to $(1+t)$,
which gives $(2,0,1+i)$,
and then raise it to the $N$-th power,
\begin{equation}
(2,0,1+i)^N = (2^N,0,(1+i)^N) = (2^N,0,
2^{N/2} \cos \frac{N\pi}{4} +
2^{N/2} \sin \frac{N\pi}{4} i ) .
\end{equation}
Applying $\varphi^{-1}$ to this last expression,
and comparing it with the other one we have,
we get our desired formulas.
\end{proof}

Corollary \ref{co:isos}, together with
Theorems \ref{th:Cl_pq} and \ref{th:Arf_mupq},
gives immediately alternative proofs of the following
classical results on Clifford algebras over the real numbers:

\begin{corollary}\label{cor:Clifford_isos}
Let $ p,q \in \mathbb{N} \cup \{ 0 \} $ and $N=p+q$.
Then,
\begin{description}
\item[$N$ even] $ $
\begin{itemize}
	\item $ \Cl_{p,q}(\mathbb{R}) \cong M_{2^{N/2}}(\mathbb{R}) $
	if $ p-q+1 \equiv 1,3 \pmod{8} $.
	\item $ \Cl_{p,q}(\mathbb{R}) \cong M_{2^{(N-2)/2}}(\mathbb{H}) $
	if $ p-q+1 \equiv 5,7 \pmod{8} $.
\end{itemize}
\item[$N$ odd] $ $
\begin{itemize}
	\item $ \Cl_{p,q}(\mathbb{R}) \cong M_{2^{(N-1)/2}}(\mathbb{R}) \times M_{2^{(N-1)/2}}(\mathbb{R}) $
	if $ p-q+1 \equiv 2 \pmod{8} $.
	\item $ \Cl_{p,q}(\mathbb{R}) \cong M_{2^{(N-1)/2}}(\mathbb{C})$
	if $ p-q+1 \equiv 0,4 \pmod{8} $.
	\item $ \Cl_{p,q}(\mathbb{R}) \cong M_{2^{(N-3)/2}}(\mathbb{H}) \times M_{2^{(N-3)/2}}(\mathbb{H}) $
	if $ p-q+1 \equiv 6 \pmod{8} $.
\end{itemize}
\end{description}
\end{corollary}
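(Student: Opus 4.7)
The plan is to simply chain together the three main ingredients already established: Theorem \ref{th:Cl_pq} identifies $\Cl_{p,q}(\mathbb{R})$ with $\mathcal{A}_{\mathbb{R}}(\bmu_{p,q})$; Corollary \ref{co:isos} gives the isomorphism type of $\mathcal{A}_{\mathbb{R}}(\bmu)$ for a regular $\bmu$ purely in terms of $N=\dim$ (mod 2) and $\Arf(\bmu)\in\{-1,0,1\}$; and Theorem \ref{th:Arf_mupq} expresses $\Arf(\bmu_{p,q})$ in terms of the residue of $p-q+1$ modulo $8$. So once I check that $\bmu_{p,q}$ is regular (which is immediate from Example \ref{ex:mu_pq}, since $b_{\bmu_{p,q}}$ is nondegenerate when $N$ is even and has $1$-dimensional radical generated by $\mathbf{e}_1+\cdots+\mathbf{e}_N$ when $N$ is odd, with $\bmu_{p,q}$ nonzero on this generator), the classification reduces to a finite case analysis keyed to the parity of $N$.

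First I would split on the parity of $N=p+q$. If $N$ is even then $p-q$ is even, so $p-q+1$ is odd and lies in $\{1,3,5,7\}$ modulo $8$. By Theorem \ref{th:Arf_mupq}, the cases $p-q+1\equiv 1,3\pmod{8}$ correspond to $\Arf(\bmu_{p,q})=1$, and the cases $p-q+1\equiv 5,7\pmod{8}$ to $\Arf(\bmu_{p,q})=-1$. Plugging into Corollary \ref{co:isos} with $n=N/2$ produces $M_{2^{N/2}}(\mathbb{R})$ and $M_{2^{(N-2)/2}}(\mathbb{H})$ respectively.

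If $N$ is odd then $p-q+1$ is even, so it lies in $\{0,2,4,6\}$ modulo $8$. Theorem \ref{th:Arf_mupq} assigns $\Arf(\bmu_{p,q})=1$ to $p-q+1\equiv 2\pmod{8}$, $\Arf(\bmu_{p,q})=-1$ to $p-q+1\equiv 6\pmod{8}$, and $\Arf(\bmu_{p,q})=0$ to $p-q+1\equiv 0,4\pmod{8}$. Feeding these into the odd branch of Corollary \ref{co:isos}, with $n=(N-1)/2$, gives respectively $M_{2^{(N-1)/2}}(\mathbb{R})\times M_{2^{(N-1)/2}}(\mathbb{R})$, $M_{2^{(N-3)/2}}(\mathbb{H})\times M_{2^{(N-3)/2}}(\mathbb{H})$, and $M_{2^{(N-1)/2}}(\mathbb{C})$, matching the statement.

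There is no real obstacle: all the conceptual work has already been done in Sections \ref{se:qf_algebras} and in the proof of Theorem \ref{th:Arf_mupq}. The only thing to be careful about is bookkeeping — verifying regularity of $\bmu_{p,q}$ so that Corollary \ref{co:isos} applies, and correctly converting the exponent $n$ in that corollary into an expression in $N=p+q$ so that the displayed sizes $2^{N/2}$, $2^{(N-2)/2}$, $2^{(N-1)/2}$, $2^{(N-3)/2}$ come out right in each of the five cases.
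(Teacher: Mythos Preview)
Your approach is exactly the paper's: the corollary is stated there as an immediate consequence of Corollary~\ref{co:isos}, Theorem~\ref{th:Cl_pq}, and Theorem~\ref{th:Arf_mupq}, and you have simply made the routine case analysis explicit.

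One minor slip worth flagging: your parenthetical claim that $\bmu_{p,q}$ is always regular when $N$ is odd is not correct. For example, with $(p,q)=(5,0)$ the volume element $\mathbf{e}_1\cdots\mathbf{e}_5$ squares to $+1$ in $\Cl_{5,0}(\mathbb{R})$, so $\bmu_{5,0}$ takes the value $+1$ on the generator of the radical; more generally, for $N$ odd the form $\bmu_{p,q}$ is regular precisely when $\Arf(\bmu_{p,q})=0$, and is equivalent to the non-regular forms $\bmu_0^N$ or $\bmu_1^N$ otherwise. This does not damage your argument, however: Theorem~\ref{th:isos} (on which Corollary~\ref{co:isos} rests) already handles all the forms $\bmu_i^N$ from Dickson's list, i.e.\ all forms with $\dim\rad(b_{\bmu})\le 1$, and Example~\ref{ex:mu_pq} guarantees that $\bmu_{p,q}$ satisfies this weaker hypothesis. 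The word ``regular'' in the statement of Corollary~\ref{co:isos} is itself a slight overstatement by the paper.
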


\begin{corollary}\label{cor:periodicity}
Let $ p,q \in \mathbb{N} \cup \{ 0 \} $,
then the following `periodicity results' hold:
\begin{enumerate}
\item $ \Cl_{p+1,q+1}(\mathbb{R}) \cong
\Cl_{p,q}(\mathbb{R}) \otimes_{\mathbb{R}} M_2(\mathbb{R}) $.
\item $ \Cl_{p+2,q}(\mathbb{R}) \cong
\Cl_{q,p}(\mathbb{R}) \otimes_{\mathbb{R}} M_2(\mathbb{R}) $.
\item $ \Cl_{p,q+2}(\mathbb{R}) \cong
\Cl_{q,p}(\mathbb{R}) \otimes_{\mathbb{R}} \mathbb{H} $.
\item $ \Cl_{p+4,q}(\mathbb{R}) \cong
\Cl_{p,q+4}(\mathbb{R}) $.
\end{enumerate}
\end{corollary}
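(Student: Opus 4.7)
The plan is to reduce every identity to the classification in Corollary \ref{cor:Clifford_isos}, according to which $\Cl_{p,q}(\mathbb{R})$ is determined up to isomorphism by its dimension $2^{p+q}$ and by the residue $k := p-q+1 \pmod{8}$. For each of (1)-(4), I would (a) verify that both sides have the same dimension and (b) verify that both sides lie in the same one of the five ``types'' of Corollary \ref{cor:Clifford_isos} (real matrix, quaternionic matrix, complex matrix, real direct product, or quaternionic direct product).

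Identities (1) and (4) are immediate. For (1) the substitution $(p,q)\mapsto(p+1,q+1)$ preserves $k$, the dimension multiplies by $4=\dim M_2(\mathbb{R})$, and the standard identity $M_n(D)\otimes_{\mathbb{R}}M_2(\mathbb{R})\cong M_{2n}(D)$ for $D\in\{\mathbb{R},\mathbb{C},\mathbb{H}\}$ matches the matrix sizes. For (4) one has $(p+4)-q+1\equiv p-(q+4)+1\pmod{8}$ and the dimensions trivially agree, so both sides coincide by Corollary \ref{cor:Clifford_isos}.

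For (2), the residues associated with $\Cl_{p+2,q}(\mathbb{R})$ and $\Cl_{q,p}(\mathbb{R})$ are $k+2$ and $2-k$ modulo $8$ respectively. A routine case check over $k\in\{0,1,\ldots,7\}$ shows that these two residues always land in the same class among $\{1,3\}$, $\{5,7\}$, $\{0,4\}$, $\{2\}$, $\{6\}$; for example $k=1$ gives $3$ and $1$ (both real) and $k=3$ gives $5$ and $7$ (both quaternionic). Since tensoring with $M_2(\mathbb{R})$ preserves the base division algebra and doubles the matrix size, and the total dimensions obviously match, Corollary \ref{cor:Clifford_isos} yields (2).

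The main obstacle is (3), where the residues become $k-2$ for $\Cl_{p,q+2}(\mathbb{R})$ and $2-k$ for $\Cl_{q,p}(\mathbb{R})$, and the tensor factor on the right is $\mathbb{H}$ rather than $M_2(\mathbb{R})$. The bookkeeping is handled by the standard multiplication table
\begin{equation*}
M_n(\mathbb{R})\otimes_{\mathbb{R}}\mathbb{H}\cong M_n(\mathbb{H}),\qquad
M_n(\mathbb{H})\otimes_{\mathbb{R}}\mathbb{H}\cong M_{4n}(\mathbb{R}),\qquad
M_n(\mathbb{C})\otimes_{\mathbb{R}}\mathbb{H}\cong M_{2n}(\mathbb{C}),
\end{equation*}
which precisely converts the type determined by $2-k$ into the one determined by $k-2$, with a simultaneous adjustment of the matrix size. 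The most delicate sub-cases are the complex one (where the asymmetric $\mathbb{C}\otimes_{\mathbb{R}}\mathbb{H}\cong M_2(\mathbb{C})$ must be invoked) and the direct-product sub-cases; but these are finite verifications, and once they are dispatched Corollary \ref{cor:Clifford_isos} completes the proof.
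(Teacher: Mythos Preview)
Your proposal is correct and follows essentially the same strategy as the paper: both arguments reduce each periodicity identity to the classification result (Corollary~\ref{cor:Clifford_isos}, equivalently Corollary~\ref{co:isos} together with Theorem~\ref{th:Arf_mupq}), checking that the two sides have the same dimension and fall into the same isomorphism class.

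The only difference is in the bookkeeping. The paper works at the level of the Arf invariant and exploits symmetries of the trigonometric formula in Theorem~\ref{th:Arf_mupq}: for~(2) it observes that $q-p+1 = 4 - \bigl((p+2)-q+1\bigr)$, which via the identity $\cos(\pi/2-\theta)+\sin(\pi/2-\theta)=\cos\theta+\sin\theta$ gives $\Arf(\bmu_{p+2,q})=\Arf(\bmu_{q,p})$; for~(3) it uses $q-p+1 = -\bigl(p-(q+2)+1\bigr)$ to obtain $\Arf(\bmu_{p,q+2})=-\Arf(\bmu_{q,p})$, the sign flip matching the fact that the tensor factor is $\mathbb{H}=\mathcal{A}_{\mathbb{R}}(\bmu_1^2)$ (with $\Arf=-1$) rather than $M_2(\mathbb{R})=\mathcal{A}_{\mathbb{R}}(\bmu_0^2)$ (with $\Arf=+1$). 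This avoids your explicit case check over $k\in\{0,\dots,7\}$ and your separate appeal to the Brauer-group identities $M_n(D)\otimes_{\mathbb{R}}\mathbb{H}\cong\cdots$, replacing them by the single remark that $\mathcal{A}_{\mathbb{R}}(\bmu)\otimes_{\mathbb{R}}\mathcal{A}_{\mathbb{R}}(\bmu')\cong\mathcal{A}_{\mathbb{R}}(\bmu\perp\bmu')$. Your route is slightly more elementary in that it never uses the trigonometric form of the Arf invariant, at the cost of the eight-case verification.
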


\begin{proof}
For the first case, it is enough to show that
$ \Arf(\bmu_{p+1,q+1}) = \Arf(\bmu_{p,q}) $,
and this was proved in the proof of Theorem \ref{th:Arf_mupq}.
For the second case, note that $ q-p+1 = 4 - \bigl( (p+2)-q+1 \bigr) $,
so $ \Arf(\bmu_{p+2,q}) = \Arf(\bmu_{q,p}) $ by Theorem \ref{th:Arf_mupq}.
Also, $ q-p+1 = - \bigl( p-(q+2)+1 \bigr) $,
so Theorem \ref{th:Arf_mupq} shows that
$ \Arf(\bmu_{p,q+2}) = -\Arf(\bmu_{q,p}) $,
and this proves the third case.
Finally, $ p+4-q+1 \equiv p-(q+4)+1 \pmod{8} $,
so $ \Arf(\bmu_{p+4,q}) = \Arf(\bmu_{p,q+4}) $,
and this proves the last case.
\end{proof}

Alternatively, we can prove that
\begin{equation}
\Arf(\bmu_{p+2,q}) = \Arf(\bmu_{q,p})
\end{equation}
reasoning in a similar way as we did in the proof of Theorem \ref{th:Arf_mupq}.
Let $ \mathbf{e}_1 , \dots , \mathbf{e}_N $ and
$ \mathbf{f}_1 , \dots , \mathbf{f}_N , \mathbf{f}_{N+1} , \mathbf{f}_{N+2} $
be the generators of $C_2^{q+p}$ and $C_2^{(p+2)+q}$ as in Example \ref{ex:mu_pq},
but reorder the indexes so that
$ \bmu_{p+2,q} (\mathbf{f}_{N+1}) = \bmu_{p+2,q} (\mathbf{f}_{N+2}) = +1 $
and $ \bmu_{q,p} (\mathbf{e}_i) = - \bmu_{p+2,q} (\mathbf{f}_i) $ for $ i = 1 , \dots , N $.
We have again four types of terms in $C_2^{(p+2)+q}$:
(1) the $ \mathbf{f}_I $,
(2) the $ \mathbf{f}_I \mathbf{f}_{N+1} \mathbf{f}_{N+2} $,
(3) the $ \mathbf{f}_I \mathbf{f}_{N+1} $,
(4) the $ \mathbf{f}_I \mathbf{f}_{N+2} $
($ I = \{ 1 \leq i_1 < i_2 < \cdots < i_r \leq N \} $,
$ \mathbf{f}_I = \mathbf{f}_{i_1} \mathbf{f}_{i_2} \cdots \mathbf{f}_{i_r} $).
Since $ \bmu_{p+2,q} ( \mathbf{f}_{N+1} \mathbf{f}_{N+2} ) = -1 $
and $ \beta_{\bmu} (\mathbf{f}_I , \mathbf{f}_{N+1} \mathbf{f}_{N+2} ) = 1 $,
the terms of the second type cancel out
with the terms of the first type,
$ \bmu_{p+2,q} (\mathbf{f}_I) =
- \bmu_{p+2,q} ( \mathbf{f}_I \mathbf{f}_{N+1} \mathbf{f}_{N+2} ) $.
Also, we have that $ \bmu_{p+2,q} ( \mathbf{f}_I \mathbf{f}_{N+1} )
= \bmu_{p+2,q} ( \mathbf{f}_I \mathbf{f}_{N+2} ) $,
hence the contribution of the terms of the fourth type to $\Arf(\bmu_{p+2,q})$
is the same as that of the terms of the third type.
If $ \vert I \vert $ is odd, then
\[
\bmu_{p+2,q} ( \mathbf{f}_I \mathbf{f}_{N+1} )
= - \bmu_{p+2,q}(\mathbf{f}_I) \bmu_{p+2,q}(\mathbf{f}_{N+1})
= \bmu_{q,p}(\mathbf{e}_I) ;
\]
whereas if $ \vert I \vert $ is even, also
\[
\bmu_{p+2,q} ( \mathbf{f}_I \mathbf{f}_{N+1} )
= \bmu_{p+2,q}(\mathbf{f}_I) \bmu_{p+2,q}(\mathbf{f}_{N+1})
= \bmu_{q,p}(\mathbf{e}_I) .
\]
We conclude that $ \Arf(\bmu_{p+2,q}) = \Arf(\bmu_{q,p}) $.

The same argument works for $\bmu_{p,q+2}$ and $\bmu_{q,p}$,
but this time we would have $ \bmu_{p,q+2} (\mathbf{f}_{N+1})
= \bmu_{p,q+2} (\mathbf{f}_{N+2}) = -1 $,
so we get
\begin{equation}
\Arf(\bmu_{p,q+2}) = -\Arf(\bmu_{q,p}) .
\end{equation}

\section*{Acknowledgements}

Both authors were supported by the Spanish
Mi\-nis\-te\-rio de Econom\'ia y Competitividad
and Fondo Europeo de De\-sa\-rro\-llo Regional FEDER
(MTM2013-45588-C3-2-P).
Adri\'an Rodrigo-Escudero was also supported by
a doctoral grant of the Diputaci\'on General de Arag\'on.


\begin{thebibliography}{99} 

\bibitem{AM99}
H.~Albuquerque\ and\ S.~Majid,
Quasialgebra structure of the octonions,
J.~Algebra {\bf 220} (1999), no.~1, 188--224.

\bibitem{AM02}
H.~Albuquerque\ and\ S.~Majid,
Clifford algebras obtained by twisting of group algebras,
J.~Pure Appl. Algebra {\bf 171} (2002), no.~2-3, 133--148.

\bibitem{BKR17}
Y.~Bahturin, M.~Kochetov\ and\ A.~Rodrigo-Escudero,
Classification of involutions on graded-division simple real algebras,
arXiv:1707.05526 [math.RA].

\bibitem{BZ16}
Y.~Bahturin\ and\ M.~Zaicev,
Simple graded division algebras over the field of real numbers,
Linear Algebra Appl. {\bf 490} (2016), 102--123.

\bibitem{Che54}
C.~C.~Chevalley,
{\it The algebraic theory of spinors},
Columbia University Press, New York, 1954.

\bibitem{Dic01}
L.~E.~Dickson,
{\it Linear groups: With an exposition of the Galois field theory},
B.~G.~Teubner, Leipzig, 1901.

\bibitem{EK13}
A.~Elduque\ and\ M.~Kochetov,
{\it Gradings on simple Lie algebras},
Mathematical Surveys and Monographs, 189,
American Mathematical Society, Providence, RI, 2013.

\bibitem{Lam05}
T.~Y.~Lam,
{\it Introduction to quadratic forms over fields},
Graduate Studies in Mathematics, 67,
American Mathematical Society, Providence, RI, 2005.

\bibitem{MO10}
S.~Morier-Genoud\ and\ V.~Ovsienko,
Simple graded commutative algebras,
J.~Algebra {\bf 323} (2010), no.~6, 1649--1664.

\bibitem{Ovs16}
V.~Ovsienko,
Real Clifford algebras and quadratic forms over $\mathbb{F}_2$: two old problems become one,
Math. Intelligencer {\bf 38} (2016), no.~3, 1--5.

\bibitem{Pas77}
D.~S.~Passman,
{\it The algebraic structure of group rings},
Pure and Applied Mathematics, Wiley-Interscience, New York, 1977.

\bibitem{Rod16}
A.~Rodrigo-Escudero,
Classification of division gradings on finite-dimensional simple real algebras,
Linear Algebra Appl. {\bf 493} (2016), 164--182.

\end{thebibliography}
\end{document}